\documentclass[a4paper,twocolumn]{article}
\usepackage[fontsize=9.07pt]{fontsize}
\usepackage[T1]{fontenc}
\usepackage[utf8]{inputenc}
\usepackage{lmodern}
\usepackage{microtype}
\usepackage[a4paper,left=0.65in,right=0.65in,top=0.9in,bottom=0.95in,
            columnsep=0.25in]{geometry}
\usepackage{amsmath,amssymb,amsthm,mathtools}
\usepackage{booktabs,array}
\usepackage{caption}
\usepackage{enumitem}
\setlist{itemsep=0.2em,topsep=0.3em,parsep=0pt}
\setlist[enumerate,1]{label=\textup{(\arabic*)},leftmargin=*}
\usepackage[dvipsnames]{xcolor}
\definecolor{LinkNavy}{RGB}{20,45,95}
\usepackage[colorlinks=true,linkcolor=LinkNavy,citecolor=LinkNavy,
            urlcolor=LinkNavy,breaklinks=true]{hyperref}

\theoremstyle{plain}
\newtheorem{theorem}{Theorem}[section]
\newtheorem{lemma}[theorem]{Lemma}
\newtheorem{proposition}[theorem]{Proposition}
\newtheorem{corollary}[theorem]{Corollary}
\newtheorem{conjecture}[theorem]{Conjecture}
\theoremstyle{definition}

\newtheorem{question}[theorem]{Question}
\theoremstyle{remark}
\newtheorem{remark}[theorem]{Remark}

\newtheorem{mainthm}{Theorem}

\DeclareMathOperator{\Rom}{Rom}
\DeclareMathOperator{\cov}{cov}

\newcommand{\Z}{\mathbb{Z}}

\allowdisplaybreaks

\title{\bfseries A congruence obstruction to Roman's bound\\ for Zarankiewicz numbers}
\author{Ankan Sadhu\\[0.2em]
\small Government College of Engineering and Ceramic Technology, Kolkata, India\\
\small\texttt{ankan-sadhu-br24-3003@gcect.ac.in}}
\date{\small August 2026}

\begin{document}

\makeatletter
\twocolumn[%
\begin{@twocolumnfalse}
\vspace*{0.15em}
\begin{center}
  {\LARGE\bfseries A congruence obstruction to Roman's bound\\[0.25em]
   for Zarankiewicz numbers\par}
  \vskip 1.1em
  {\large Ankan Sadhu\par}
  \vskip 0.35em
{\small Department of Computer Science and Engineering\par}
  {\small Government College of Engineering and Ceramic Technology,
   Kolkata, India\par}
  {\small\texttt{ankan-sadhu-br24-3003@gcect.ac.in}\par}
  \vskip 0.55em
  {\small August 2026\par}
\end{center}
\vskip 0.9em

\begin{abstract}\noindent
Let $z(m,n;s,t)$ be the largest number of ones in an $m \times n$ zero-one matrix with no $s \times t$ all-ones submatrix. Roman's 1975 inequality is still the best general upper bound for $s \ge 3$. We show that it is not attained on a large part of the range just below the design threshold $T = (t-1)\binom{m}{s}/(s+1)$.

The proof has two steps. First, for $n = T-c$ with $1 \le c \le sT/(s+2)$, Roman's bound is \emph{exactly} the elementary counting bound $(s+1)(T-c) + \lfloor 2c/s \rfloor$; in this range it says nothing beyond a budget inequality and convexity. Second, attainment forces all but at most one column to have size $s+1$ or $s+2$, and a point of a column of either size lies in a number of $s$-sets divisible by $d = \gcd\bigl(s,\binom{s+1}{2}\bigr)$. The coverage left unused at each point is therefore pinned to a single residue $\mu$ modulo $d$, and a global count destroys it. Writing $r = c \bmod s$ and $\sigma(r)$ for an explicit slack bounded in terms of $s$ alone, we prove $z(m,T-c;s,t) \le \Rom(m,T-c) - 1$ whenever $1 \le \sigma(r) < d$ and $m\mu \ne s\sigma(r)$, or $\mu \ne 0$ and $m\mu > s\sigma(r)$.

The first alternative is an odd-$s$ phenomenon confined to one residue class, and gives $\Theta(m^{s})$ values of $n$; the second needs $\mu \ne 0$ but holds for every residue once $m$ exceeds a threshold depending only on $s$ and $\mu$, and so covers the whole interval. For $s = 4$, $t = 2$, $m = 28$ it covers all $2730$ values of $n$ in the range. When $c = (s+1)/2$ and an $s$-$(m,s+1,t-1)$ design exists we obtain the exact value $z = (s+1)(T-c)$. Finally we locate the obstruction relative to linear programming. The relaxation over all $2^{[m]}$ subset variables collapses under $S_m$-symmetrisation to the counting bound, so below the threshold Roman's bound, the counting bound and that relaxation all coincide, and no linear relaxation of the covering constraints alone can improve on \eqref{eq:roman} here. For the refined program of Davies, Gill and Horsley we are more careful: we show its optimum is still attained at the Roman vertex for an explicitly described sub-family that contains the parameters of Theorem~\ref{thm:C}, and we record the parameters at which their program does beat \eqref{eq:roman}.
\end{abstract}

\medskip
\noindent\textbf{Keywords.} Zarankiewicz problem, Roman's bound, combinatorial
design, packing, double counting.

\noindent\textbf{MSC 2020.} 05D05, 05B05, 05C35.

\vskip 1.6em
\end{@twocolumnfalse}
]
\makeatother

\section{Introduction}

\subsection{The problem}

Zarankiewicz asked in 1951 for the maximum number of ones in an $m \times n$ zero-one matrix containing no all-ones submatrix of size $s \times t$ \cite{Zarankiewicz1951}. We write $z(m,n;s,t)$ for this maximum, and abbreviate $z(m,n;s,s)$ to $z_s(m,n)$. Seventy-five years on, the exact values are known only in scattered ranges. For $s = t = 2$ the K\H{o}v\'ari--S\'os--Tur\'an and Reiman theory \cite{KovariSosTuran1954,Reiman1958,HyltenCavallius1958} is close to complete, but for $s \ge 3$ the general picture is dominated by a single inequality due to Roman, and has been since 1975.

It helps to forget matrices at once. A matrix as above is the same thing as a list $C_1,\dots,C_n$ of subsets of $[m] = \{1,\dots,m\}$ --- the supports of its columns --- subject to the requirement that no $s$-subset of $[m]$ lie in $t$ or more of the $C_j$. Throughout we set
\[
  \lambda = t-1 ,
\]
so the requirement reads: every $s$-set is covered at most $\lambda$ times. The number of ones is $\sum_j |C_j|$, and for an $s$-set $S$ we write $\cov(S) = |\{ j : S \subseteq C_j \}|$.

Counting incidences between columns and $s$-sets gives the \emph{budget inequality}
\begin{equation}\label{eq:budget}
  \sum_{j=1}^{n} \binom{|C_j|}{s} \;=\; \sum_{|S| = s} \cov(S) \;\le\; \lambda \binom{m}{s} .
\end{equation}
The budget is spent most efficiently by sets of size exactly $s+1$: such a set buys $s+1$ ones at a cost of $s+1$ units, a ratio of one, whereas size $s$ buys $s$ ones for a single unit and size $s+2$ buys $s+2$ ones for $\binom{s+2}{2}$ units. This singles out
\begin{equation}\label{eq:T}
  T \;=\; \frac{\lambda\binom{m}{s}}{s+1} ,
\end{equation}
the number of blocks of an $s$-$(m,s+1,\lambda)$ design. When such a design exists its blocks form a matrix with $n = T$ columns and $(s+1)T$ ones. We call $m$ \emph{admissible} for the given $s,t$ when $T \in \Z$, and we call $T$ the \emph{design threshold}.

\subsection{Roman's bound}

Roman \cite{Roman1975} proved that for every integer $k \ge s-1$,
\begin{equation}\label{eq:roman}
  z(m,n;s,t) \;\le\; \frac{\lambda\binom{m}{s}}{\binom{k}{s-1}} + \frac{(k+1)(s-1)}{s}\, n ,
\end{equation}
and we set $\Rom(m,n) = \min_{k \ge s-1} \lfloor \Rom_k(m,n) \rfloor$, where $\Rom_k$ is the right-hand side of \eqref{eq:roman}. We use the normalisation of Davies, Gill and Horsley \cite{DaviesGillHorsley2026}; Roman's original parameter is $k+1$. Inequality \eqref{eq:roman} remains the best general upper bound known for $s \ge 3$.

As a function of $n$, the minimum in \eqref{eq:roman} is piecewise linear, with breakpoints at
\begin{equation}\label{eq:romanpoint}
  n_\ell \;=\; \frac{\lambda\binom{m}{s}}{\binom{\ell}{s}} , \qquad \ell \ge s ,
\end{equation}
where the value is $\ell\, n_\ell$. Following \cite[\S2]{DaviesGillHorsley2026} we call these the \emph{Roman points}. Note that $n_{s+1} = T$ and $n_{s+2} = 2T/(s+2)$. At an integral Roman point $n_\ell$ it is classical, and recorded explicitly in \cite[\S2]{DaviesGillHorsley2026}, that \eqref{eq:roman} is attained if and only if an $s$-$(m,\ell,\lambda)$ design exists.

Improvements on \eqref{eq:roman} are known in special ranges. {\v{C}}ul{\'i}k \cite{Culik1956} determined $z(m,n;s,t)$ exactly once $n$ is large relative to $m$; Guy \cite{Guy1969}, Irving \cite{Irving1978} and M{\"o}rs \cite{Mors1981} gave refinements for small parameters; asymptotic results are due to F{\"u}redi \cite{Furedi1996}, Nikiforov \cite{Nikiforov2010} and Conlon \cite{Conlon2022}. Links with bipartite Ramsey numbers and finite geometry are developed in \cite{GoddardHenningOellermann2000,DamasdiHegerSzonyi2013,CollinsRiasanovskyWallaceRadziszowski2016}, and the case $s = 2$ is settled in many instances by Chen, Horsley and Mammoliti \cite{ChenHorsleyMammoliti2024,ChenHorsleyMammoliti2024b}. Davies, Gill and Horsley \cite{DaviesGillHorsley2026} improve on \eqref{eq:roman} by linear programming, both in closed form (their Theorem~1.3, valid for $\max\{2,s^2-2s\} \le k \le m$) and numerically for small parameters with $s,t \ge 3$; their own summary is that Roman's bounds ``are still the best known upper bounds \dots\ in many cases''. For $s \ge 3$ we know of no general result that covers the whole interval just below the design threshold, and on the bulk of that interval \eqref{eq:roman} is still all we have; \S\ref{sec:lp} makes this precise by measuring how often the program of \cite{DaviesGillHorsley2026} improves on \eqref{eq:roman} there.

\subsection{What we prove}

Fix $s \ge 2$ and $t \ge 2$, put $\lambda = t-1$, let $m$ be admissible, and let $T$ be as in \eqref{eq:T}.

Everything turns on one arithmetic quantity. A point of a column of size $s+1$ lies in $\binom{s}{s-1} = s$ of its $s$-subsets, and a point of a column of size $s+2$ lies in $\binom{s+1}{s-1} = \tfrac{s(s+1)}{2}$ of them. The largest modulus that cannot tell these two apart is
\begin{equation}\label{eq:d}
  d \;=\; \gcd\!\left(s,\ \binom{s+1}{2}\right) \;=\;
  \begin{cases}
    s, & s \text{ odd},\\[2pt]
    s/2, & s \text{ even},
  \end{cases}
\end{equation}
the evaluation because $\gcd\bigl(s, \tfrac{s}{2}(s+1)\bigr) = \tfrac{s}{2}\gcd(2,s+1) = \tfrac{s}{2}$ for even $s$. Set
\begin{equation}\label{eq:mu}
  \mu \;\equiv\; \lambda\binom{m-1}{s-1} \pmod{d}, \qquad 0 \le \mu < d .
\end{equation}
Note $d = 1$ exactly when $s = 2$, and that is the one case where the method says nothing; Remark~\ref{rem:chm} explains what replaces it there.

The first result costs nothing but organises everything after it.

\begin{mainthm}\label{thm:A}
Let $s \ge 2$. For every integer $c$ with $1 \le c \le \dfrac{s}{s+2}\,T$,
\[
  \Rom(m,T-c) \;=\; (s+1)(T-c) + \left\lfloor \frac{2c}{s} \right\rfloor ,
\]
and the minimum in \eqref{eq:roman} is attained at $k = s+1$.
\end{mainthm}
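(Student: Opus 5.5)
The plan is a direct computation. Write $A = \lambda\binom{m}{s} = (s+1)T$ and $n = T-c$, so that $\Rom_k(n) = A/\binom{k}{s-1} + \tfrac{(k+1)(s-1)}{s}\,n$. Since $\lfloor\cdot\rfloor$ is monotone, $\min_k \lfloor \Rom_k\rfloor = \lfloor \min_k \Rom_k\rfloor$. So it suffices to show that $k=s+1$ minimises the real-valued $\Rom_k(n)$ over $k \ge s-1$, and then to evaluate $\Rom_{s+1}(n)$.

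\emph{Evaluation at $k=s+1$.} Here $\binom{s+1}{s-1} = s(s+1)/2$, so the first term is $2T/s$. The second term is $\tfrac{(s+2)(s-1)}{s}(T-c) = \tfrac{s^2+s-2}{s}(T-c)$. Adding and simplifying gives
\[
  \Rom_{s+1}(T-c) = (s+1)(T-c) + \frac{2c}{s}.
\]
Since $(s+1)(T-c)\in\Z$, the floor is $(s+1)(T-c)+\lfloor 2c/s\rfloor$. As a sanity check, the same computation at $k=s$ gives $(s+1)(T-c) + (s+1)c/s$. This is strictly larger because $s+1>2$ and $c\ge 1$, which is consistent with the claim.

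\emph{Minimality of $k=s+1$.} Look at consecutive differences in $k$. Using $\binom{k+1}{s-1}-\binom{k}{s-1}=\binom{k}{s-2}$,
\[
  \Delta_k(n) := \Rom_{k+1}(n)-\Rom_k(n) = \frac{s-1}{s}\,n \;-\; A\,\frac{\binom{k}{s-2}}{\binom{k}{s-1}\binom{k+1}{s-1}} .
\]
The key step is to show that the subtracted coefficient $\phi(k) = \binom{k}{s-2}/\bigl(\binom{k}{s-1}\binom{k+1}{s-1}\bigr)$ is strictly decreasing in $k \ge s-1$. Since $\binom{k}{s-2}/\binom{k}{s-1} = (s-1)/(k-s+2)$, we have $\phi(k) = (s-1)/\bigl((k-s+2)\binom{k+1}{s-1}\bigr)$, and both factors in the denominator increase with $k$. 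Consequently $\Delta_k(n)$ is increasing in $k$ for fixed $n$, i.e.\ $k\mapsto\Rom_k(n)$ is convex. It is therefore enough to check two things: $\Delta_s(n)\le 0$ and $\Delta_{s+1}(n)\ge 0$. Solving $\Delta_k(n)=0$ gives $n = \tfrac{s}{s-1}A\,\phi(k)$, and a short simplification should identify this with the Roman point $n_{k+1} = A/\binom{k+1}{s}$ from \eqref{eq:romanpoint}. Indeed, $\tfrac{s}{s-1}\cdot\tfrac{s-1}{(k-s+2)\binom{k+1}{s-1}} = s/\bigl((k+1-s+1)\binom{k+1}{s-1}\bigr) = 1/\binom{k+1}{s}$. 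So $\Delta_s(n)\le 0$ iff $n\le n_{s+1}=T$, which holds since $c\ge1$. Likewise $\Delta_{s+1}(n)\ge 0$ iff $n\ge n_{s+2}=2T/(s+2)$, i.e.\ iff $c \le T - 2T/(s+2) = sT/(s+2)$, which is exactly the hypothesis.

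The only step needing care is this monotonicity and the identification of the zero of $\Delta_k$ with $n_{k+1}$; everything else is arithmetic. The boundary value $k=s-1$, where $\binom{s-1}{s-1}=1$, is covered by the same convexity argument, since $\phi$ is defined and decreasing from $k=s-1$ on. Finally, strictness of $\Delta_s(n)<0$ for $c\ge1$ is not needed, because only the value of the minimum matters. It does show that $k=s+1$ is a minimiser, and the unique one when $c< sT/(s+2)$.
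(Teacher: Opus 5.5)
Your proof is correct, and it takes a genuinely different and more uniform route than the paper. The paper evaluates $\Rom_{s+1}$ and then compares each other $k$ against it directly. It handles $k=s$ and $k=s-1$ by hand, and handles $k=s+1+i$ through the ratio $R(i)=\bigl(A(k)-s-1\bigr)/\bigl(B(k)-B(s+1)\bigr)$. It shows $R(1)=s/(s+2)$ exactly, and then $R(i)\ge s/(s+2)$ for $i\ge 2$ by a three-way case split: $s=2$ in closed form, $s=3$ checked numerically, and $s\ge 4$ by discarding a positive term.

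You instead show that $k\mapsto\Rom_k(n)$ has strictly increasing differences for every fixed $n$, because $\phi(k)=(s-1)/\bigl((k-s+2)\binom{k+1}{s-1}\bigr)$ is strictly decreasing. You then identify the zero of $\Delta_k$ with the Roman point $n_{k+1}$. This reduces the whole minimisation to the two inequalities $n_{s+2}\le n\le n_{s+1}$. It treats $k=s-1$, $k=s$ and all large $k$ at once, with no case analysis in $s$. It also explains why the range ends at $c=sT/(s+2)$: that value is exactly $n=n_{s+2}$.

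The same convexity also shows that, for each $\ell\ge s$, $\Rom_\ell$ is the minimiser whenever $n_{\ell+1}\le n\le n_\ell$. That is the piecewise-linear structure with breakpoints at the Roman points, which the introduction asserts without proof. The paper's pairwise comparison gives explicit expressions for each gap $\Rom_k-\Rom_{s+1}$, but pays for them with the case split; your argument is the more economical one.
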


Below the design threshold, then, Roman's bound is the naive counting bound, no more and no less. It carries no information beyond \eqref{eq:budget} and the convexity of $v \mapsto \binom{v}{s}$, so any improvement in this range has to come from a non-linear obstruction. We produce one.

\begin{mainthm}\label{thm:B}
Let $s \ge 3$, let $t \ge 2$, let $m \ge s+2$ be admissible, and let $d$ and $\mu$ be as in \eqref{eq:d} and \eqref{eq:mu}. Let $c$ be an integer with $1 \le c \le \tfrac{s}{s+2}T$, let $r$ be the residue of $c$ modulo $s$, and let $\sigma(r)$ be the slack of \eqref{eq:slack}. Suppose at least one of
\begin{align}
  &1 \le \sigma(r) < d \qquad\text{and}\qquad m\mu \ne s\,\sigma(r) , \label{eq:hyp1} \\
  &\mu \ne 0 \qquad\text{and}\qquad m\mu > s\,\sigma(r) \label{eq:hyp2}
\end{align}
holds. Then
\[
  z(m,T-c;s,t) \;\le\; \Rom(m,T-c) - 1 .
\]
\end{mainthm}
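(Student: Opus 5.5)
Suppose, for contradiction, that some family $C_1,\dots,C_n$ with $n=T-c$ has every $s$-set covered at most $\lambda$ times and has $\sum_j|C_j| = \Rom(m,T-c)$. By Theorem~\ref{thm:A} this total is $(s+1)n+\lfloor 2c/s\rfloor$. Write $a_j=|C_j|-(s+1)$. The budget inequality \eqref{eq:budget} reads $\sum_j\binom{|C_j|}{s}\le (s+1)T=(s+1)n+(s+1)c$.

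\textbf{Step 1: structure of an extremal family.} Because $v\mapsto\binom{v}{s}$ is convex, the cheapest way to realise the surplus $\sum_j a_j=\lfloor 2c/s\rfloor$ is to use only $a_j\in\{0,1\}$. A column of size $s+2$ costs $\binom{s+1}{2}$ more than a column of size $s+1$. A column of size $\le s$ or $\ge s+3$ wastes at least a fixed amount relative to this optimum. Comparing this waste with the available room $(s+1)c-\lfloor 2c/s\rfloor\binom{s+1}{2}$, I expect to show that all but at most one column have size $s+1$ or $s+2$. This is the structural statement quoted in the introduction. The unused budget
\[
U=(s+1)T-\sum_j\binom{|C_j|}{s}
\]
is then bounded by an explicit quantity depending only on $r=c\bmod s$; this is the slack $\sigma(r)$ of \eqref{eq:slack}. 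In the clean case, where there is no exceptional column, $U=\sigma(r)$ exactly.

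\textbf{Step 2: local congruence.} For a point $p$, set
\[
u_p=\lambda\binom{m-1}{s-1}-\sum_{j:\,p\in C_j}\binom{|C_j|-1}{s-1}\ \ge 0,
\]
which is the unused coverage on $s$-sets through $p$. Every column of size $s+1$ contributes $s$ to the sum, and every column of size $s+2$ contributes $\binom{s+1}{2}$. Both are $\equiv 0\pmod d$ by \eqref{eq:d}. Hence $u_p\equiv\mu\pmod d$ at every point not in the exceptional column. Counting pairs (point, uncovered $s$-set incidence) gives the global identity
\[
\sum_p u_p=sU .
\]

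\textbf{Step 3: contradiction.} Under \eqref{eq:hyp2} we have $\mu\ne 0$, so $u_p\ge\mu$ for every $p$. Then $m\mu\le\sum_p u_p=s\sigma(r)$, which contradicts $m\mu>s\sigma(r)$.

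Under \eqref{eq:hyp1} we have $\sum_p u_p=s\sigma(r)<sd$. I would write $u_p=\mu+d\,e_p$ with $e_p\ge 0$, so that $s\sigma(r)=m\mu+d\sum_p e_p$. Since $m\mu\ne s\sigma(r)$, some $e_p\ge1$. I would then use the divisibility $d\mid s$ together with $1\le\sigma(r)<d$ to derive a contradiction. The sum is $\equiv m\mu\pmod d$ while also equalling $s\sigma(r)\equiv 0\pmod d$ (as $d\mid s$). A point with $e_p\ge1$ has at least $\mu+d$ uncovered incidences, and distributing these among the $s$-sets through $p$ forces $U\ge d>\sigma(r)$.

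\textbf{Main obstacle.} The hard part is the exceptional column allowed by Step 1, of size $s$ or $s+3$ (or larger). At its points the congruence is shifted by $\binom{|C|-1}{s-1}\bmod d$. To handle it, I would first check whether its waste already exceeds $\sigma(r)$ for the relevant residues, which would exclude it outright. If it does not, I would redo the count in Step 3 with the $|C|$ shifted points separated out, absorbing the shift into the precise definition of $\sigma(r)$. I expect this case analysis to be where the exact form of \eqref{eq:slack} is determined.
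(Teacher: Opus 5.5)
Your architecture is the paper's: rigidity of extremal profiles (Lemma~\ref{lem:rigid}), the link congruence (Lemma~\ref{lem:link}), $\sum_p u_p=sU$, and $u_p\le U$. In the clean case both halves of your Step~3 are correct. Under \eqref{eq:hyp1} the clean case is the only one, since an exceptional column wastes at least $p(s)=\binom{s}{2}\ge d>\sigma(r)$.

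The gap is the case you flag and then leave open: \eqref{eq:hyp2} together with one exceptional column. Counting does not exclude it. For odd $s$ it is permitted whenever $\sigma(r)\ge\binom{s}{2}$, e.g.\ $s=3$, $c=1$ with profile $3^1 4^{n-2} 5^1$. At the points of that column your bound $u_p\ge\mu$ fails. Your proposed remedy, absorbing the shift into ``the precise definition of $\sigma(r)$'', is not available, because $\sigma(r)$ is fixed by \eqref{eq:slack} in the statement. What compensates the shift is the total slack. With $p$ as in \eqref{eq:pen}, an exceptional column of size $v$ lowers it to $U=\sigma(r)-p(v)$, so the identity becomes $\sum_p u_p=s\sigma(r)-s\,p(v)$.

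To close this case you need two facts the proposal does not supply.

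First, the classification. $\sigma_{\max}$ of Theorem~\ref{thm:D}(2) is below $\binom{s}{2}$ for even $s$ and below $2\binom{s}{2}$ for odd $s$. Also $p(s-1)=s^2-1$ and, for $s\ge5$, $p(s+3)=\binom{s+1}{3}$ both exceed $\sigma_{\max}$. Hence the only possible exceptional sizes are $v=s$, or $v\in\{3,6\}$ when $s=3$.

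Second, the shift. In each of these cases $w=\binom{v-1}{s-1}\equiv 1\pmod d$; note $\binom{5}{2}=10$ for $v=6$. Since $\mu\ge1$, you get $u_p\ge\mu-1$ at the $v$ points of the column and $u_p\ge\mu$ elsewhere. Then $\sum_p u_p\ge m\mu-v>s\sigma(r)-v\ge s\sigma(r)-s\,p(v)$, the last step because $v\le s\,p(v)$ in all three cases. This contradicts the identity.

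Without the classification you cannot evaluate $w$ or $p(v)$. Without the drop from $\sigma(r)$ to $\sigma(r)-p(v)$, the hypothesis $m\mu>s\sigma(r)$ gives only $m\mu-v>s\sigma(r)-v$, and the count does not close.
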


The two hypotheses are different obstructions, and it is worth saying what each delivers.

Hypothesis \eqref{eq:hyp1} asks that the leftover coverage be too small to distribute at all. By Theorem~\ref{thm:D}(1) this forces $s$ odd and $r = (s+1)/2$, and then $\sigma(r) = (s+1)/2$; the side condition $m\mu \ne s\sigma(r)$ fails only if $\mu \ne 0$ and $m = s(s+1)/(2\mu)$, so in particular it holds for every $m > s(s+1)/2$, and by Lemma~\ref{lem:design} it holds automatically whenever the relevant design exists. This alternative applies to every admissible $m$, and for fixed $m$ it covers an arithmetic progression of modulus $s$ inside an interval of length proportional to $T$, hence roughly
\[
  \frac{T}{s(s+2)} \;=\; \frac{\lambda\binom{m}{s}}{s(s+1)(s+2)} \;=\; \Theta\!\left(m^{s}\right)
\]
values of $n$.

Hypothesis \eqref{eq:hyp2} asks instead that $\mu \ne 0$, so that each of the $m$ points is forced to carry at least $\mu$ units of leftover coverage, and that there is not enough leftover coverage to go round. It says nothing about $r$. Since $\sigma(r) \le \sigma_{\max}$ is bounded in terms of $s$ alone by Theorem~\ref{thm:D}(2), while the left-hand side grows linearly in $m$, it holds for \emph{every} residue $r$ as soon as
\[
  m \;>\; \frac{s\,\sigma_{\max}}{\mu} ,
\]
a threshold depending only on $s$ and $\mu$. For such $m$ the conclusion covers every integer $n$ with $\tfrac{2}{s+2}T \le n < T$: not a progression inside an interval, but the whole interval, of length $\tfrac{s}{s+2}T = \Theta(m^s)$. Unlike \eqref{eq:hyp1}, this alternative is available for even $s$; see Theorem~\ref{thm:D}(3). Both hypotheses are sufficient rather than necessary, and Remark~\ref{rem:sharp} records what the method gives in full.

By Theorem~\ref{thm:A}, Roman's bound is the counting bound at every $n$ in the range, so the failure is not an artefact of a bad choice of $k$. The interaction with Roman points is recorded in Remark~\ref{rem:endpoint}.

At the smallest admissible value of $c$ the upper bound is matched by a construction.

\begin{mainthm}\label{thm:C}
Let $s \ge 3$ be odd, let $t \ge 2$, let $m \ge s+2$ be admissible, take $c = (s+1)/2$, and suppose an $s$-$(m,s+1,\lambda)$ design exists. Then
\[
  \begin{aligned}
    z\!\left(m, T - \tfrac{s+1}{2}; s,t\right)
      &= (s+1)\!\left(T - \tfrac{s+1}{2}\right) \\
      &= \Rom\!\left(m, T - \tfrac{s+1}{2}\right) - 1 .
  \end{aligned}
\]
\end{mainthm}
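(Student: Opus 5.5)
We prove Theorem~\ref{thm:C} by two matching inequalities. The upper bound and the identification with $\Rom-1$ come from earlier results. The lower bound is an explicit construction obtained by deleting blocks from the design.

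For the upper bound, put $c=(s+1)/2$. Since $s$ is odd, $2c/s=(s+1)/s\in(1,2)$, so $\lfloor 2c/s\rfloor=1$. We must check that $c\le sT/(s+2)$. An $s$-$(m,s+1,\lambda)$ design is assumed to exist and $m\ge s+2$, so $T\ge \binom{s+2}{s}/(s+1)=(s+2)/2$, and $sT/(s+2)\ge s/2$. Since $c$ is an integer and $T$ is in fact much larger than this for admissible $m$, we get $c\le sT/(s+2)$. This needs a short check: if $T=(s+2)/2$ exactly, then $\lambda=1$, $m=s+2$, and $sT/(s+2)=s/2<c$. In that extreme case I would handle the upper bound directly by the counting argument, or note that $T\in\Z$ forces $s+2$ to be even, which contradicts $s$ odd, so this case cannot occur. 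Theorem~\ref{thm:A} then gives
\[
\Rom(m,T-c)=(s+1)(T-c)+1 .
\]
Next we apply Theorem~\ref{thm:B} through hypothesis~\eqref{eq:hyp1}. The residue is $r=c\bmod s=(s+1)/2$, and by Theorem~\ref{thm:D}(1) we have $\sigma(r)=(s+1)/2$. Because $s\ge3$ is odd, $d=s$, so $1\le\sigma(r)<d$. The side condition $m\mu\ne s\sigma(r)$ holds automatically when the design exists, by Lemma~\ref{lem:design}, as noted after Theorem~\ref{thm:B}. Hence $z\le \Rom(m,T-c)-1=(s+1)(T-c)$.

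For the lower bound, take the $T$ blocks of an $s$-$(m,s+1,\lambda)$ design and delete any $c$ of them. Here $T-c\ge1$ by the bound on $c$ above. The remaining $T-c$ blocks, used as column supports, cover every $s$-set at most $\lambda=t-1$ times, so the matrix has no $s\times t$ all-ones submatrix. It has $(s+1)(T-c)$ ones. Therefore $z\ge(s+1)(T-c)$, and combining this with the upper bound gives both equalities in the statement.

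The only step needing real care is the verification of the hypotheses of Theorems~\ref{thm:A} and~\ref{thm:B}. In particular we need the range condition $c\le sT/(s+2)$ at small $m$, and the side condition $m\mu\ne s\sigma(r)$, which I take from Lemma~\ref{lem:design}. If that lemma did not apply directly, I would verify the side condition by hand. When the design exists, each point lies in $\lambda\binom{m-1}{s-1}/s$ blocks, so $s\mid\lambda\binom{m-1}{s-1}$ and hence $\mu=0$. Then $m\mu=0\ne s(s+1)/2$, and the condition holds.
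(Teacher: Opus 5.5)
Your route is the paper's: Theorem~\ref{thm:A} gives $\Rom(m,T-c)=(s+1)(T-c)+1$, Theorem~\ref{thm:B} applies through \eqref{eq:hyp1} with $\mu=0$ from Lemma~\ref{lem:design}, and the lower bound comes from deleting $c$ blocks of the design. All of that is fine. The gap is in the step you flagged yourself, the range condition $c\le\frac{s}{s+2}T$. That condition is equivalent to
\[
  T \;\ge\; \frac{(s+1)(s+2)}{2s} \;=\; \frac{s+3}{2}+\frac1s ,
\]
so for integer $T$ and odd $s$ you need $T\ge\frac{s+5}{2}$. Your argument gives $T\ge\frac{s+2}{2}$. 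It then uses parity to exclude only the equality case $T=\frac{s+2}{2}$, which leaves $T\ge\frac{s+3}{2}$. The integer value $T=\frac{s+3}{2}$ is still not excluded, and there $\frac{sT}{s+2}=\frac{s(s+3)}{2(s+2)}<\frac{s+1}{2}=c$. The integrality of $c$ does not help either, since $\lfloor s/2\rfloor=\frac{s-1}{2}<c$. Your claim that $T$ is ``much larger'' for admissible $m$ is therefore exactly what still has to be proved. The estimate $\lambda\binom{m}{s}\ge\binom{s+2}{s}$ is too crude to prove it.

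The fix is the paper's case split on $m$:
\begin{itemize}
\item If $m=s+2$, then $T=\lambda(s+2)/2$. Since $s+2$ is odd, $T\in\Z$ forces $\lambda$ to be even, so $T\ge s+2\ge\frac{(s+1)(s+2)}{2s}$.
\item If $m\ge s+3$, then $T\ge\lambda\binom{s+3}{3}/(s+1)\ge\frac{(s+2)(s+3)}{6}$. This is at least $\frac{(s+1)(s+2)}{2s}$ because $s(s+3)\ge 3(s+1)$, that is, $s^2\ge 3$.
\end{itemize}

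Your fallback of handling the bad case ``by the counting argument'' would not rescue the statement. The non-attainment argument (Lemmas~\ref{lem:rigid} and~\ref{lem:link}) does not use the range condition and would still give $z\le(s+1)(T-c)$. But the second equality, $(s+1)(T-c)=\Rom(m,T-c)-1$, rests on Theorem~\ref{thm:A}, which genuinely needs $c\le\frac{s}{s+2}T$.
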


By the existence theorems of Keevash \cite{Keevash2014} and of Glock, K\"uhn, Lo and Osthus \cite{GlockKuhnLoOsthus2023}, the design hypothesis holds for all sufficiently large $m$ satisfying the necessary divisibility conditions, for every odd $s$ and every $t$. For $s = 3$, $\lambda = 2$ no such machinery is needed: Hanani \cite{Hanani1960,Hanani1979} showed that a $3$-$(m,4,2)$ design exists precisely when $m \not\equiv 0 \pmod 3$ and $m \not\equiv 3 \pmod 4$; see \cite{ColbournDinitz2007} for existence tables.

Both hypotheses of Theorem~\ref{thm:B} are sharp in a strong sense.

\begin{mainthm}\label{thm:D}
Let $s \ge 3$, let $d$ be as in \eqref{eq:d}, and for $0 \le r < s$ put $\sigma(r) = (s+1)r - \binom{s+1}{2}\lfloor 2r/s \rfloor$.
\begin{enumerate}
\item If $s$ is odd, the unique residue $r$ with $1 \le \sigma(r) < d$ is $r = (s+1)/2$, and then $\sigma(r) = (s+1)/2$. If $s$ is even, no residue satisfies $1 \le \sigma(r) < d$. Hypothesis \eqref{eq:hyp1} is therefore an odd-$s$ phenomenon, confined to a single residue class.
\item The slack is bounded in terms of $s$ alone:
\[
  \sigma_{\max} \;:=\; \max_{0 \le r < s} \sigma(r) \;=\;
  \begin{cases} \dfrac{s^2-1}{2}, & s \text{ odd},\\[6pt] \dfrac{(s+1)(s-2)}{2}, & s \text{ even}.\end{cases}
\]
\item Hypothesis \eqref{eq:hyp2} is available for both parities of $s$. It requires $\mu \ne 0$ and hence $d \nmid \lambda$; conversely, if $d \nmid \lambda$ then $\mu \equiv \lambda \not\equiv 0 \pmod{d}$ for every $m \equiv s \pmod{(s-1)!\,d}$ that is admissible.
\item The method is vacuous only for $s = 2$, where $d = 1$.
\end{enumerate}
\end{mainthm}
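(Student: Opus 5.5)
The plan is direct computation from the closed form of $\sigma$. First I will split $0\le r<s$ by the value of $\lfloor 2r/s\rfloor$, which is $0$ for $r<s/2$ and $1$ for $s/2\le r<s$. This gives
\[
  \sigma(r)=(s+1)r \quad (r<s/2), \qquad
  \sigma(r)=(s+1)\Bigl(r-\tfrac{s}{2}\Bigr) \quad (s/2\le r<s).
\]
Every part of Theorem~\ref{thm:D} except (3) then reduces to inspecting these two linear pieces.

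\textbf{Parts (1) and (2).} For (1), the first piece gives $\sigma=0$ at $r=0$ and $\sigma\ge s+1>d$ otherwise.
\begin{itemize}
\item If $s$ is odd ($d=s$), the second piece gives $\sigma=(s+1)(2r-s)/2$. At $r=(s+1)/2$ this equals $(s+1)/2$, which lies in $[1,s)$ because $s\ge3$. Every larger $r$ gives $\sigma\ge 3(s+1)/2>s$.
\item If $s$ is even ($d=s/2$), the second piece gives $\sigma=0$ at $r=s/2$ and $\sigma\ge s+1>d$ beyond it.
\end{itemize}
So no residue qualifies for even $s$, and exactly $r=(s+1)/2$ qualifies for odd $s$.

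For (2), each piece is increasing in $r$, so I compare the two endpoints $r=\lceil s/2\rceil-1$ and $r=s-1$.
\begin{itemize}
\item For odd $s$ they give $(s+1)(s-1)/2$ and $(s+1)(s-2)/2$, so the maximum is $(s^2-1)/2$.
\item For even $s$ both endpoints give $(s+1)(s-2)/2$.
\end{itemize}

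\textbf{Part (3).} This is the only step with real content, and it is a congruence for a binomial coefficient. The first direction is immediate: if $d\mid\lambda$ then $\mu\equiv\lambda\binom{m-1}{s-1}\equiv0\pmod d$.

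For the converse, put $M=(s-1)!\,d$ and take $m\equiv s\pmod M$. Each factor satisfies $m-j\equiv s-j\pmod M$ for $1\le j\le s-1$. Multiplying these congruences gives
\[
  (m-1)(m-2)\cdots(m-s+1)=(s-1)!+Mk
\]
for some integer $k$. Dividing by $(s-1)!$ yields $\binom{m-1}{s-1}=1+dk\equiv1\pmod d$, and hence $\mu\equiv\lambda\not\equiv0\pmod d$.

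To show the hypothesis is actually available for even $s$, I will exhibit the example $s=4$, $t=2$, $m=28$. Here $d=2\nmid\lambda=1$, $28\equiv4\pmod{12}$, and $T=\binom{28}{4}/5=4095$ is an integer. For odd $s$, $\lambda=1$ and $d=s$ works similarly whenever an admissible $m$ exists in the progression; I expect the existence of admissible $m$ in this progression to be the only point needing care. If the statement is read as conditional on admissibility, no existence argument is needed.

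\textbf{Part (4).} Since $d\in\{s,s/2\}$, we have $d=1$ for $s\ge2$ exactly when $s=2$. With $d=1$, hypothesis \eqref{eq:hyp1} would require $1\le\sigma<1$, and \eqref{eq:hyp2} would require $\mu\ne0$ with $0\le\mu<1$. Both are impossible, so the method is vacuous for $s=2$. Conversely, for $s\ge3$, parts (1) and (3) supply instances where one of the hypotheses can hold.
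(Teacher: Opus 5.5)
Your treatment of (1), (2) and the congruence in (3) is correct, and it is the paper's argument. The paper also splits on $\lfloor 2r/s\rfloor$, writing the second piece as $(s+1)w/2$ with $w=2r-s$. It compares the same two endpoints for $\sigma_{\max}$. It gets $\binom{m-1}{s-1}\equiv 1\pmod d$ by multiplying the congruences $m-1-i\equiv s-1-i\pmod{(s-1)!\,d}$ and dividing by $(s-1)!$. For (4) the paper only checks that $d\ge 2$ when $s\ge 3$. Your reading, that for each $s\ge 3$ some hypothesis can genuinely hold, is stronger. Like ``available for both parities'' in (3), it is an existence claim.

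That existence claim is the one real gap. You give an even instance but leave odd $s$ conditional. The route you suggest, $\lambda=1$ on the progression $m\equiv s\pmod{(s-1)!\,d}$, fails beyond $s=3$:
\begin{enumerate}
\item For $s=5$ the modulus is $120$. Every $m\equiv 5\pmod 8$ has $\binom{m}{5}$ odd, because adding $5$ to a multiple of $8$ produces no carries in base $2$.
\item Hence $6\nmid\lambda\binom{m}{5}$ for odd $\lambda$, and the progression contains no admissible $m$ at all.
\item The same happens for every odd $s\ge 5$, since then $2^e>s$, where $2^e$ is the largest power of $2$ dividing $(s-1)!$.
\end{enumerate}
Your suspicion that this is the delicate point is justified. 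The paper's proof disposes of it by saying admissibility is a congruence condition and ``one intersects the two''. As this example shows, that intersection can be empty.

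A uniform repair is to take $\lambda=s+1$:
\begin{enumerate}
\item Then $T=\binom{m}{s}$, so every $m$ is admissible.
\item $d\nmid s+1$, because $d\mid s$ and $d\ge 2$.
\item On the whole progression, $\mu\equiv s+1\equiv 1\pmod d$.
\end{enumerate}
Hence \eqref{eq:hyp2} holds at every residue once $m>s\,\sigma_{\max}$, for every $s\ge 3$ of either parity. For odd $s$, \eqref{eq:hyp1} also holds at $r=(s+1)/2$ whenever $m\ne s(s+1)/2$. If you only want a single odd example, $s=3$, $\lambda=1$, $m=9$ gives $T=21$ and $\mu\equiv\binom{8}{2}\equiv 1\pmod 3$.
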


Part (1) is why an argument of this shape stops at odd $s$ if one insists on the first alternative, and part (3) is why it need not stop there at all. Remark~\ref{rem:even} records the smallest even instances.

Specialising to $s = t = 3$: here $d = 3$ and $\mu \equiv (m-1)(m-2) \pmod 3$, so $\mu = 2$ when $3 \mid m$ and $\mu = 0$ otherwise. The two hypotheses split along divisibility of $m$ by $3$, and between them cover every admissible $m$.

\begin{corollary}\label{cor:E}
Let $m \ge 5$ satisfy $m \not\equiv 3 \pmod 4$ and put $T = m(m-1)(m-2)/12$.
\begin{enumerate}
\item For every $c \equiv 2 \pmod 3$ with $2 \le c \le 3T/5$,
\[
  \begin{aligned}
    z(m,T-c;3,3) &\;\le\; 4(T-c) + \tfrac{2c-1}{3} - 1 \\
      &\;=\; \Rom(m,T-c) - 1 .
  \end{aligned}
\]
If moreover $3 \nmid m$, then for $c = 2$ this is an equality, and
\[
  z(m,T-2;3,3) \;=\; 4(T-2) \;=\; \Rom(m,T-2) - 1 .
\]
\item If $3 \mid m$ and $m \ge 9$, then for \emph{every} integer $c$ with $1 \le c \le 3T/5$,
\[
  z(m,T-c;3,3) \;\le\; \Rom(m,T-c) - 1 .
\]
\end{enumerate}
\end{corollary}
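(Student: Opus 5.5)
The plan is to specialise Theorems~\ref{thm:A}, \ref{thm:B} and \ref{thm:C} to $s=t=3$, so that $\lambda=2$, and to check their hypotheses by direct computation.

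\emph{Parameters.} Since $T=2\binom{m}{3}/4=m(m-1)(m-2)/12$, I first check that $m$ is admissible exactly when $m\not\equiv 3\pmod 4$. The product of three consecutive integers is divisible by $6$, and one more factor of $2$ is needed. That factor is missing only when $m$ and $m-2$ are odd and $m-1\equiv 2\pmod 4$, i.e.\ when $m\equiv 3\pmod 4$. Next, by \eqref{eq:d}, $d=3$. By \eqref{eq:mu}, $\mu\equiv 2\binom{m-1}{2}=(m-1)(m-2)\pmod 3$. This gives $\mu=2$ when $3\mid m$, because then $(m-1)(m-2)\equiv 2\cdot 1$. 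When $3\nmid m$ one factor vanishes mod $3$, so $\mu=0$. From the formula in Theorem~\ref{thm:D}, $\sigma(r)=4r-6\lfloor 2r/3\rfloor$, so $\sigma(0)=0$, $\sigma(1)=4$ and $\sigma(2)=2$. In particular $\sigma_{\max}=4$. The range $1\le c\le \tfrac{s}{s+2}T$ of Theorems~\ref{thm:A} and~\ref{thm:B} is exactly $1\le c\le 3T/5$.

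\emph{Part (1).} Take $c\equiv 2\pmod 3$, so $r=2$ and $\sigma(r)=2$; then $1\le\sigma<d=3$. The side condition $m\mu\ne 6$ holds in both cases: if $\mu=0$ it reads $0\ne 6$, and if $\mu=2$ it needs $m\ne 3$, which is excluded by $m\ge 5$. Hence \eqref{eq:hyp1} holds, and Theorem~\ref{thm:B} gives $z\le\Rom(m,T-c)-1$. By Theorem~\ref{thm:A}, $\Rom(m,T-c)=4(T-c)+\lfloor 2c/3\rfloor$. Since $2c\equiv 1\pmod 3$, the floor equals $(2c-1)/3$, which gives the displayed form. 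The constraint $c\ge 2$ is automatic for a positive $c\equiv 2\pmod 3$.

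For equality at $c=2=(s+1)/2$ when $3\nmid m$, I invoke Theorem~\ref{thm:C}. It needs a $3$-$(m,4,2)$ design, and Hanani's theorem supplies one precisely when $m\not\equiv 0\pmod 3$ and $m\not\equiv 3\pmod 4$; both conditions are in force. Theorem~\ref{thm:C} then gives $z=4(T-2)=\Rom-1$. As a consistency check, $4(T-2)+1-1=4(T-2)$ agrees with the general bound at $c=2$.

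\emph{Part (2).} Suppose $3\mid m$, so $\mu=2\ne 0$. Hypothesis \eqref{eq:hyp2} needs $2m>3\sigma(r)$ for the given residue $r$. This is implied by $2m>3\sigma_{\max}=12$, i.e.\ $m>6$, and a multiple of $3$ with $m\ge 9$ satisfies it. So Theorem~\ref{thm:B} applies for every $c$ in the range.

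There is no real obstacle here. The only points needing care are the admissibility criterion, the evaluation of $\mu$ mod $3$, and matching the floor expression to $(2c-1)/3$; each is a one-line check.
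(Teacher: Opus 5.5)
Your proposal is correct and is essentially the paper's argument: part (1) is hypothesis \eqref{eq:hyp1} at $r=2$, where $\sigma(2)=2<d=3$ and the side condition reduces to $m\ne 3$, combined with Theorem~\ref{thm:A} for the floor and with Theorem~\ref{thm:C} and Hanani's existence theorem for the equality; part (2) is hypothesis \eqref{eq:hyp2} with $\mu=2$ and $\sigma(r)\le 4$, which reduces to $2m>12$. Your explicit checks of admissibility and of $\mu$ modulo $3$ supply the one-line computations that the paper leaves implicit.
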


Part (2) follows from \eqref{eq:hyp2}: here $\mu = 2$ and $\sigma(r) \le 4$, so $m\mu > s\sigma(r)$ reduces to $2m > 12$. The bound $m \ge 9$ is sharp: at $m = 6$ one has $z(6,9;3,3) = 36 = \Rom(6,9)$ and $z(6,6;3,3) = 26 = \Rom(6,6)$, so Roman's bound really is attained there \cite{Tan2022}.

Table~\ref{tab:values} records the first few instances of part (1). For $m = 10$, Roman's bound fails at all twelve values $n = 58,55,52,\dots,25$. Part (2) is stronger where it applies: for $m = 9$ it gives all twenty-five values $n = 17,18,\dots,41$, consecutively.

\begin{table*}[t]
\centering
\renewcommand{\arraystretch}{1.15}
\begin{tabular}{r r r r r r}
\toprule
$m$ & $T$ & \#$\{c\}$ & $n = T-2$ & $\Rom(m,T-2)$ & $z(m,T-2;3,3)$ \\
\midrule
$5$  & $5$    & $1$   & $3$    & $13$   & $12$   \\
$8$  & $28$   & $5$   & $26$   & $105$  & $104$  \\
$10$ & $60$   & $12$  & $58$   & $233$  & $232$  \\
$13$ & $143$  & $28$  & $141$  & $565$  & $564$  \\
$14$ & $182$  & $36$  & $180$  & $721$  & $720$  \\
$16$ & $280$  & $56$  & $278$  & $1113$ & $1112$ \\
$17$ & $340$  & $68$  & $338$  & $1353$ & $1352$ \\
$20$ & $570$  & $114$ & $568$  & $2273$ & $2272$ \\
$22$ & $770$  & $154$ & $768$  & $3073$ & $3072$ \\
$25$ & $1150$ & $230$ & $1148$ & $4593$ & $4592$ \\
$26$ & $1300$ & $260$ & $1298$ & $5193$ & $5192$ \\
\bottomrule
\end{tabular}
\medskip
\caption{The case $s = t = 3$, $3 \nmid m$. The column \#$\{c\}$ counts the $c \equiv 2 \pmod 3$ with $2 \le c \le 3T/5$, that is, the number of columns $n$ at which Corollary~\ref{cor:E} shows Roman's bound fails. The last two columns give the smallest such instance, $c = 2$, where Theorem~\ref{thm:C} supplies the exact value.}
\label{tab:values}
\end{table*}

\subsection{Relation to previous work}\label{ssec:prev}

Non-attainment of \eqref{eq:roman} is not new, and infinite families of non-attainment are not new either. At an integral Roman point the design criterion quoted in \S1.2 settles the matter. For $s = t = 3$ the relevant Roman point is $n = T$ and the relevant design is a $3$-$(m,4,2)$ design, whose existence requires $3 \nmid m$; so \eqref{eq:roman} already fails at $(m,T)$ for every admissible $m$ divisible by $3$, an infinite family obtainable in two lines. Individual values below Roman's bound are also known: Tan's satisfiability computations \cite{Tan2022} give, among many others, $z_3(6,8) = 32$, $z_3(6,10) = 39$ and $z_3(3,5) = 12$; Davies, Gill and Horsley \cite{DaviesGillHorsley2026} improve on \eqref{eq:roman} for a number of small parameter sets with $s = t = 3$ by linear programming; and the unrefereed preprint \cite{BhanNobiliLanger2026} reports $z_3(11,21) = 116$, $z_3(11,22) = 121$ and $z_3(12,22) = 132$.

The tools are standard. The quantity $\rho(S) = \lambda - \cov(S)$ of \S\ref{sec:rigidity} is the multiplicity of $S$ in the leave of the partial design formed by the columns, and divisibility constraints on leaves are a routine device in design theory \cite{ColbournDinitz2007}. The point-link double count of Lemma~\ref{lem:link} goes back to Guy \cite{Guy1969} and is used by Tan \cite[Thm.~2.3]{Tan2022} to bound packing numbers. What is new is not the congruence but where it is applied.

The closest antecedent is a second paper of Chen, Horsley and Mammoliti \cite{ChenHorsleyMammoliti2024b}, which determines $Z_{2,2}(m,n)$ for every $n$ above the triple system threshold $\tfrac13\binom{m}{2}$ and, for large $m$, for every $n$ down to $\tfrac16\binom{m}{2} + O(m)$. Their Lemmas 2.5 and 2.6 have the same three-step shape as our Theorem~\ref{thm:B}: pin the degree profile down with a counting identity, then contradict the surviving profile with a divisibility obstruction on a derived graph. The strategy is theirs, not ours. Two things differ. Their case is $s = 2$, where the obstruction is a parity statement --- the defect of a linear $3$-hypergraph is an even graph --- and parity bites only finitely often, so their exceptional set consists of the deficiencies in $\{1,2,3,4\}$ or $\{0,1,3\}$, a set bounded independently of $m$. Ours is a congruence modulo $d$, available for every $s \ge 3$ of either parity, and it bites at every deficiency in a fixed residue class --- or, when $\mu \ne 0$, at every deficiency in the whole range --- so the exceptional set has size $\Theta(m^s)$. Their two bounds $U^{+}$ and $U^{-}$ are the cases $k = s$ and $k = s+1$ of $\Rom_k$, and the fact that $U^{-}$ binds below the threshold is the $s = 2$ instance of Theorem~\ref{thm:A}.

Three things are new. First, Theorem~\ref{thm:A} identifies Roman's bound below the threshold with the elementary counting bound. For $s = 2$ this is implicit in \cite{ChenHorsleyMammoliti2024b}; we add the statement for all $s$ and $t$, with the explicit range $c \le \tfrac{s}{s+2}T$ on which it holds, and it is this identity that makes the remaining proofs short. Second, the classical design argument operates only \emph{at} Roman points and only when a design fails to exist, whereas Theorem~\ref{thm:B} operates at a positive-density set of $n$ strictly below the threshold; via hypothesis \eqref{eq:hyp1} it does so precisely when the design \emph{does} exist, which is the case where the classical argument is silent. Third, \S\ref{sec:lp} locates the obstruction relative to linear programming. Proposition~\ref{prop:subsetlp} shows that the relaxation over all $2^{[m]}$ subset variables collapses, after $S_m$-symmetrisation, to the counting bound; with Theorem~\ref{thm:A} this says that Roman's bound, the counting bound and the full subset relaxation agree throughout the range, so an improvement here must be an integrality obstruction. The refined program of \cite{DaviesGillHorsley2026} adds genuinely new inequalities and does beat \eqref{eq:roman} at some parameters in the range; Proposition~\ref{prop:lp} determines exactly when its optimum is still the Roman value, and the sub-family it describes contains the parameters of Theorem~\ref{thm:C}.

\subsection{Organisation}

Section~\ref{sec:counting} proves Theorem~\ref{thm:A}. Section~\ref{sec:rigidity} classifies the degree profiles that can meet the bound and proves Theorem~\ref{thm:D}. Section~\ref{sec:congruence} proves the link congruence and the design lemma. Section~\ref{sec:main} assembles Theorem~\ref{thm:B}, and Section~\ref{sec:exact} proves Theorem~\ref{thm:C}. Section~\ref{sec:sharp} treats sharpness, including even $s$ and a result showing the failure set need not be an interval. Section~\ref{sec:lp} treats linear programming, and Section~\ref{sec:open} collects open problems.

\section{The counting bound}\label{sec:counting}

Throughout this section $s \ge 2$, $\lambda = t-1$, $m$ is admissible, and $C_1,\dots,C_n$ is a list of subsets of $[m]$ in which every $s$-set is covered at most $\lambda$ times. Put $d_j = |C_j|$.

It is natural to measure degrees from $s+1$. Set
\[
  e_j = d_j - (s+1), \qquad g(e) = \binom{s+1+e}{s} - (s+1) ,
\]
so $g(0) = 0$ and \eqref{eq:budget} becomes $\sum_j g(e_j) \le \lambda\binom{m}{s} - (s+1)n$.

\begin{lemma}[Two-slope convexity]\label{lem:convex}
For every integer $e \ge -(s+1)$,
\[
  \begin{gathered}
    g(e) \;\ge\; \binom{s+1}{2} e^{+} \;-\; s\,e^{-} , \\
    e^{+} = \max(e,0), \quad e^{-} = \max(-e,0) ,
  \end{gathered}
\]
with equality if and only if $e \in \{-1,0,1\}$.
\end{lemma}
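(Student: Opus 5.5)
The plan is to treat $g$ as a function of the integer variable $e$ and split into three regimes: $e \ge 1$, $e = 0$, and $e \le -1$. In each regime I compare $g$ with the linear function $L(e)=\binom{s+1}{2}e^{+}-s\,e^{-}$, using the explicit values of $g$ at $-1$, $1$, $2$ and a convexity or monotonicity argument for the rest. Put $v = s+1+e$, so $g(e)=\binom{v}{s}-(s+1)$ with $0\le v$.

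First I check the three equality points directly. At $e=0$ both sides vanish. At $e=1$, $g(1)=\binom{s+2}{s}-(s+1)=\binom{s+2}{2}-(s+1)=\binom{s+1}{2}$, which equals $L(1)$. At $e=-1$, $g(-1)=\binom{s}{s}-(s+1)=-s$, which equals $L(-1)$.

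For $e\ge 2$ I use the forward difference $g(e+1)-g(e)=\binom{v}{s-1}$ with $v=s+1+e$. This difference is strictly increasing in $v$ for $v\ge s-1$, so $g$ is strictly convex on $e\ge -2$. In particular the increment from $e$ to $e+1$ for $e\ge1$ is $\binom{s+2+(e-1)}{s-1}\ge\binom{s+2}{s-1}=\binom{s+2}{3}$. This exceeds the slope $\binom{s+1}{2}$ strictly, since $\binom{s+2}{3}/\binom{s+1}{2}=(s+2)/3>1$ for $s\ge2$. Starting from equality at $e=1$ and summing these increments gives $g(e)>\binom{s+1}{2}e$ for all $e\ge2$.

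For $e\le -2$, i.e.\ $0\le v\le s-1$, we have $\binom{v}{s}=0$, so $g(e)=-(s+1)$. Meanwhile $L(e)=s e\le -2s<-(s+1)$ because $s\ge2$. So again $g(e)>L(e)$, strictly. Taken together, the three regimes give the inequality everywhere, with equality exactly at $e\in\{-1,0,1\}$.

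I expect no real obstacle. The only care needed is in the negative range: convexity of $\binom{v}{s}$ fails to be strict for $v<s$, which is why that case is handled by the direct evaluation $g=-(s+1)$ rather than by the difference argument. The strictness for $e\ge2$ also genuinely uses $s\ge2$.
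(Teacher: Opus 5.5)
Your proof is correct and is essentially the paper's argument: you use the exact values $g(1)=\binom{s+1}{2}$ and $g(-1)=-s$, the increments $g(e+1)-g(e)=\binom{s+1+e}{s-1}\ge\binom{s+2}{3}>\binom{s+1}{2}$ for $e\ge1$, and the direct evaluation $g(e)=-(s+1)>-2s\ge se$ for $e\le-2$. Your side remark on strict convexity of $g$ on $e\ge-2$ is not needed, but it is correct and harmless.
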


\begin{proof}
The function $v \mapsto \binom{v}{s}$ is convex, so it suffices to check the two steps adjacent to $v = s+1$ and confirm all later steps are steeper.

Upwards, $g(1) = \binom{s+2}{2} - (s+1) = \binom{s+1}{2}$, the asserted slope. The later increments are $g(e+1) - g(e) = \binom{s+1+e}{s-1}$, and for $e \ge 1$ this is at least $\binom{s+2}{3} = \tfrac{(s+2)(s+1)s}{6} \ge \tfrac{3(s+1)s}{6} = \binom{s+1}{2}$, strictly for $s \ge 2$. Hence $g(e) \ge \binom{s+1}{2}e$ for $e \ge 1$, strictly for $e \ge 2$.

Downwards, $g(-1) = \binom{s}{s} - (s+1) = -s$, again the asserted value. For $e \le -2$ we have $\binom{s+1+e}{s} = 0$, so $g(e) = -(s+1)$, whereas the asserted lower bound is $se \le -2s$; and $-(s+1) > -2s$ for $s \ge 2$.
\end{proof}

\begin{lemma}[Basic inequality]\label{lem:basic}
Let $n = T-c$ with $0 \le c < T$, and suppose the system above has exactly $(s+1)n + a$ ones. Put $L = \sum_j e_j^{-}$. Then
\begin{equation}\label{eq:star}
  \binom{s+1}{2}\,a \;+\; \binom{s}{2}\,L \;\le\; (s+1)\,c .
\end{equation}
In particular $a \le 2c/s$, and therefore
\begin{equation}\label{eq:counting}
  z(m,T-c;s,t) \;\le\; (s+1)(T-c) + \left\lfloor \frac{2c}{s} \right\rfloor .
\end{equation}
\end{lemma}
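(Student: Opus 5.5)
Sum the budget inequality \eqref{eq:budget} over all columns and bound each term from below with Lemma~\ref{lem:convex}. In the $e$-notation it reads
\[
  \sum_j g(e_j) \;\le\; \lambda\binom{m}{s} - (s+1)n \;=\; (s+1)T - (s+1)(T-c) \;=\; (s+1)c ,
\]
using $\lambda\binom{m}{s} = (s+1)T$ from \eqref{eq:T}.

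Lemma~\ref{lem:convex} gives $g(e_j) \ge \binom{s+1}{2}e_j^{+} - s\,e_j^{-}$ for each $j$. Summing, with $P = \sum_j e_j^{+}$ and $L = \sum_j e_j^{-}$, we get
\[
  \binom{s+1}{2}P - sL \;\le\; (s+1)c .
\]

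The number of ones is $\sum_j d_j = (s+1)n + \sum_j e_j$. Since it equals $(s+1)n + a$, we have $a = P - L$, so $P = a + L$. Substituting,
\[
  \binom{s+1}{2}(a+L) - sL \;=\; \binom{s+1}{2}a + \Bigl(\binom{s+1}{2} - s\Bigr)L \;=\; \binom{s+1}{2}a + \binom{s}{2}L ,
\]
because $\binom{s+1}{2} - s = \binom{s}{2}$. This is exactly \eqref{eq:star}.

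For the consequences, note $L \ge 0$ and $\binom{s}{2} \ge 0$, so \eqref{eq:star} gives $\binom{s+1}{2}a \le (s+1)c$, that is, $a \le 2c/s$. Since $a$ is an integer, $a \le \lfloor 2c/s\rfloor$. Applying this to an extremal matrix with $n = T-c$ columns gives \eqref{eq:counting}.

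Nothing here is delicate. The only points to check are that $e_j$ may be negative, even below $-1$ (which Lemma~\ref{lem:convex} covers for all $e \ge -(s+1)$), and that $a$ may be negative, in which case the bound holds trivially.
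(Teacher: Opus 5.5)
Your proof is correct and is essentially the paper's argument: combine the budget inequality \eqref{eq:budget} with the two-slope bound of Lemma~\ref{lem:convex}, substitute $P = a + L$, use $\binom{s+1}{2} - s = \binom{s}{2}$, and drop the non-negative $L$-term to get $a \le 2c/s$. Your side remarks (that $e_j \ge -(s+1)$ is covered by Lemma~\ref{lem:convex}, and that the integrality of $a$ gives the floor) are accurate and harmless.
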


\begin{proof}
The number of ones is $\sum_j d_j = (s+1)n + \sum_j e_j$, so $a = \sum_j e_j$. With $H = \sum_j e_j^{+}$ we have $H - L = a$. Combining \eqref{eq:budget} with Lemma~\ref{lem:convex},
\[
  \begin{aligned}
    (s+1)n + \binom{s+1}{2}H - sL
      &\;\le\; \sum_j \binom{d_j}{s} \\
      &\;\le\; \lambda\binom{m}{s} = (s+1)T \\
      &= (s+1)n + (s+1)c .
  \end{aligned}
\]
Substituting $H = a + L$ and using $\binom{s+1}{2} - s = \binom{s}{2}$ gives \eqref{eq:star}. Discarding the non-negative term $\binom{s}{2}L$ yields $\binom{s+1}{2}a \le (s+1)c$, that is $a \le 2c/s$.
\end{proof}

We now show \eqref{eq:counting} is Roman's bound. Write $\Rom_k(m,T-c) = A(k)T - B(k)c$, where substituting $\lambda\binom{m}{s} = (s+1)T$ into \eqref{eq:roman} gives
\begin{equation}\label{eq:AB}
  A(k) = \frac{s+1}{\binom{k}{s-1}} + \frac{(k+1)(s-1)}{s} , \qquad B(k) = \frac{(k+1)(s-1)}{s} .
\end{equation}

\begin{lemma}\label{lem:AB}
With $A$ as in \eqref{eq:AB} we have $A(s) = A(s+1) = s+1$ and $A(s-1) = 2s$.
\end{lemma}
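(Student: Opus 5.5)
This is a direct substitution into \eqref{eq:AB}, taking the three values of $k$ in turn. The only fact needed is $\binom{k}{s-1}$ at $k = s-1, s, s+1$:
\[
  \binom{s-1}{s-1} = 1, \qquad \binom{s}{s-1} = s, \qquad \binom{s+1}{s-1} = \binom{s+1}{2} = \frac{s(s+1)}{2}.
\]

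For $k = s$, the first term of $A(s)$ is $(s+1)/s$ and the second is $(s+1)(s-1)/s$. Their sum is
\[
  \frac{(s+1)(1 + s - 1)}{s} = s+1 .
\]

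For $k = s+1$, the first term is $(s+1)\cdot \frac{2}{s(s+1)} = \frac{2}{s}$ and the second is $(s+2)(s-1)/s$. The numerator of the sum is
\[
  2 + (s+2)(s-1) = s^2 + s = s(s+1),
\]
so $A(s+1) = s+1$.

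For $k = s-1$, the first term is $s+1$ and the second is $s(s-1)/s = s-1$. Hence $A(s-1) = 2s$.

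There is no real obstacle here. The only point needing care is the paper's normalisation: $k$ enters as $\binom{k}{s-1}$ and as $k+1$ in \eqref{eq:roman}, not Roman's original parameter. I would also note in passing that $k = s-1$ is the smallest allowed value, since the constraint is $k \ge s-1$. That value is included because the proof of Theorem~\ref{thm:A} will need it when comparing $A(k)T - B(k)c$ across all admissible $k$.
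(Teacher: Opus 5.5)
Your proposal is correct and is the same direct substitution the paper uses: evaluate $\binom{k}{s-1}$ at $k = s-1, s, s+1$ and simplify each of $A(s-1)$, $A(s)$, $A(s+1)$ in turn. Your arithmetic checks out, including $2 + (s+2)(s-1) = s(s+1)$.
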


\begin{proof}
Since $\binom{s}{s-1} = s$ and $\binom{s+1}{s-1} = s(s+1)/2$,
\[
  \begin{aligned}
    A(s) &= \frac{s+1}{s} + \frac{(s+1)(s-1)}{s} \\
      &= \frac{(s+1)\bigl(1 + (s-1)\bigr)}{s} = s+1 , \\
    A(s+1) &= \frac{2}{s} + \frac{(s+2)(s-1)}{s} = s+1 ,
  \end{aligned}
\]
and $A(s-1) = (s+1) + \tfrac{s(s-1)}{s} = 2s$.
\end{proof}

The coincidence $A(s) = A(s+1)$ is the analytic shadow of the design threshold: at $n = T$ the choices $k = s$ and $k = s+1$ give the same value $(s+1)T$, which is exactly the number of ones in a design. Which of them wins below the threshold is decided by the linear term, and $B(s+1) > B(s)$.

\begin{proof}[Proof of Theorem~\ref{thm:A}]
Take $k = s+1$. By Lemma~\ref{lem:AB} and \eqref{eq:AB},
\[
  \begin{aligned}
    \Rom_{s+1}(m,T-c) &= (s+1)T - \frac{(s+2)(s-1)}{s}c \\
      &= (s+1)(T-c) + \frac{2c}{s},
  \end{aligned}
\]
whose floor is $(s+1)(T-c) + \lfloor 2c/s \rfloor$ because $(s+1)(T-c) \in \Z$. This is exactly the counting bound \eqref{eq:counting} of Lemma~\ref{lem:basic}.

It remains to show $\Rom_k \ge \Rom_{s+1}$ for every other $k$ in the stated range of $c$. We have
\[
  \Rom_k - \Rom_{s+1} = \bigl(A(k)-s-1\bigr)T - \bigl(B(k)-B(s+1)\bigr)c .
\]
For $k = s$, $A(s) - s - 1 = 0$ and $B(s+1) - B(s) = (s-1)/s > 0$, so the difference is $\tfrac{s-1}{s}c > 0$. For $k = s-1$, $A(s-1)-s-1 = s-1 > 0$ and $B(s-1) - B(s+1) = -2(s-1)/s < 0$, so the difference is $(s-1)T + \tfrac{2(s-1)}{s}c > 0$.

Now let $k = s+1+i$ with $i \ge 1$, so $B(k) - B(s+1) = i(s-1)/s > 0$. Using $A(s+1) = s+1$ and $\binom{s+1}{2}^{-1} = 2/(s(s+1))$,
\begin{equation}\label{eq:ratio}
\begin{aligned}
  R(i) \;&:=\; \frac{A(k)-s-1}{B(k)-B(s+1)} \\
  &\;=\; 1 - \frac{2}{i(s-1)}
     + \frac{s(s+1)}{i(s-1)\binom{s+1+i}{s-1}} ,
\end{aligned}
\end{equation}
and $\Rom_k \ge \Rom_{s+1}$ holds precisely when $c \le R(i)\,T$. So it suffices to prove $R(i) \ge \tfrac{s}{s+2}$ for all $i \ge 1$.

At $i = 1$ we have $\binom{s+2}{s-1} = \tfrac{(s+2)(s+1)s}{6}$, so
\[
  \begin{aligned}
    R(1) &= 1 - \frac{2}{s-1} + \frac{6}{(s-1)(s+2)} \\
      &= 1 - \frac{2(s-1)}{(s-1)(s+2)} = \frac{s}{s+2} ,
  \end{aligned}
\]
with equality; this is what fixes the range. For $i \ge 2$ we treat three cases.

If $s = 2$, then $s - 1 = 1$ and $\binom{3+i}{1} = i+3$, so \eqref{eq:ratio} evaluates exactly:
\[
  \begin{aligned}
    R(i) &= 1 - \frac{2}{i} + \frac{6}{i(i+3)} \\
      &= \frac{i(i+3) - 2(i+3) + 6}{i(i+3)} \\
      &= \frac{i^2+i}{i(i+3)} = \frac{i+1}{i+3} ,
  \end{aligned}
\]
which is increasing in $i$ with $R(1) = \tfrac12 = \tfrac{s}{s+2}$. So $R(i) \ge \tfrac{s}{s+2}$ for all $i \ge 1$.

If $s \ge 4$, discard the positive third term in \eqref{eq:ratio}: for $i \ge 2$,
\[
  R(i) \;\ge\; 1 - \frac{2}{i(s-1)} \;\ge\; 1 - \frac{1}{s-1} \;=\; \frac{s-2}{s-1} \;\ge\; \frac{s}{s+2} ,
\]
the last step because $(s-2)(s+2) - s(s-1) = s-4 \ge 0$.

If $s = 3$, then $R(i) = 1 - \tfrac1i + \tfrac{6}{i\binom{4+i}{2}}$. At $i = 2$ this is $\tfrac12 + \tfrac{6}{30} = \tfrac{7}{10}$, and for $i \ge 3$ it exceeds $1 - \tfrac1i \ge \tfrac23$; both are at least $\tfrac35 = \tfrac{s}{s+2}$.

So $R(i) \ge \tfrac{s}{s+2}$ throughout, and $\Rom_k \ge \Rom_{s+1}$ whenever $c \le \tfrac{s}{s+2}T$.
\end{proof}

\section{Rigidity}\label{sec:rigidity}

Write $c = qs + r$ with $0 \le r < s$ and define the \emph{slack}
\begin{equation}\label{eq:slack}
  \sigma(r) \;=\; (s+1)r - \binom{s+1}{2}\left\lfloor \frac{2r}{s} \right\rfloor .
\end{equation}

It is convenient to measure every column against the two efficient sizes at once. For $0 \le v \le m$ put
\begin{equation}\label{eq:pen}
  p(v) \;=\; \binom{v}{s} - (s+1) - \binom{s+1}{2}\bigl(v - s - 1\bigr) ,
\end{equation}
the amount by which a column of size $v$ overspends the coverage budget relative to the affine interpolation through the sizes $s+1$ and $s+2$.

\begin{lemma}[The penalty function]\label{lem:pen}
The function $p$ of \eqref{eq:pen} is convex, vanishes at $v = s+1$ and $v = s+2$, is strictly positive at every other $v$, and is strictly decreasing for $v \le s+1$ and strictly increasing for $v \ge s+2$. Moreover
\[
  p(s) = \binom{s}{2}, \qquad p(s-1) = s^2-1, \qquad p(s+3) = \binom{s+1}{3} .
\]
\end{lemma}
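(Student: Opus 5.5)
The plan is to study the first and second forward differences of $p$, in the same way as in the proof of Lemma~\ref{lem:convex}.

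\emph{Convexity.} The map $v \mapsto \binom{v}{s}$ is convex on the integers. Its increments $\binom{v}{s-1}$ are nondecreasing in $v$. Subtracting the affine function $(s+1)+\binom{s+1}{2}(v-s-1)$ preserves convexity, so $p$ is convex.

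\emph{The two zeros.} Evaluate directly. We have $p(s+1)=1\cdot(s+1)-(s+1)=0$. We also have $p(s+2)=\binom{s+2}{2}-(s+1)-\binom{s+1}{2}=0$, using Pascal's rule $\binom{s+2}{2}=\binom{s+1}{2}+(s+1)$. Equivalently, the increment $p(v+1)-p(v)=\binom{v}{s-1}-\binom{s+1}{2}$ vanishes at $v=s+1$, since $\binom{s+1}{s-1}=\binom{s+1}{2}$.

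\emph{Strict monotonicity.} The key step is the increment formula $\Delta(v):=p(v+1)-p(v)=\binom{v}{s-1}-\binom{s+1}{2}$.
\begin{itemize}
\item For $v\ge s+2$ we have $\binom{v}{s-1}\ge\binom{s+2}{s-1}=\binom{s+2}{3}>\binom{s+1}{2}$. This is the comparison already made in Lemma~\ref{lem:convex}, valid for $s\ge2$. Hence $\Delta(v)>0$ and $p$ is strictly increasing on $v\ge s+2$.
\item For $v\le s$ we have $\binom{v}{s-1}\le\binom{s}{s-1}=s<\binom{s+1}{2}$ when $s\ge2$. Hence $\Delta(v)<0$ and $p$ is strictly decreasing on $v\le s+1$.
\end{itemize}

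\emph{Positivity elsewhere.} Strict monotonicity on each side of the flat step $[s+1,s+2]$, together with $p(s+1)=p(s+2)=0$, gives $p(v)>0$ for every $v\notin\{s+1,s+2\}$ with $0\le v\le m$.

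\emph{The three special values.}
\begin{itemize}
\item $p(s)=1-(s+1)+\binom{s+1}{2}=\binom{s+1}{2}-s=\binom{s}{2}$.
\item $p(s-1)=0-(s+1)+2\binom{s+1}{2}=s(s+1)-(s+1)=s^2-1$.
\item $p(s+3)=\binom{s+3}{3}-(s+1)-2\binom{s+1}{2}$. Alternatively, $p(s+3)=p(s+2)+\Delta(s+2)=\binom{s+2}{3}-\binom{s+1}{2}=\binom{s+1}{3}$, again by Pascal's rule.
\end{itemize}

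Nothing here is a genuine obstacle. The only points needing care are the strict inequalities at the boundary increments $v=s$ and $v=s+2$, which require $s\ge2$, and the observation that $\binom{v}{s-1}=0$ for $v<s-1$. The latter keeps the increments negative all the way down to $v=0$.
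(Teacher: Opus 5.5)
Your proof is correct and follows the same route as the paper: you get convexity from the nondecreasing increments $\binom{v}{s-1}$, and you evaluate $p$ directly at $s+1$, $s+2$, $s$, $s-1$ and $s+3$. The one difference is in your favour. The paper infers strict positivity and strict monotonicity from the bare fact that a convex function vanishing at two consecutive integers has those properties, which by itself gives only weak inequalities. Your explicit sign check, $\Delta(v)<0$ for $v\le s$ and $\Delta(v)>0$ for $v\ge s+2$ (equivalently $p(s)>0$ and $p(s+3)>0$), is what makes the strictness rigorous.
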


\begin{proof}
The map $v \mapsto \binom{v}{s}$ has successive differences $\binom{v}{s-1}$, which are non-decreasing, so it is convex; $p$ subtracts an affine function and is convex too. Next,
\[
  \begin{gathered}
    p(s+1) = (s+1)-(s+1)-0 = 0, \\
    p(s+2) = \binom{s+2}{2} - (s+1) - \binom{s+1}{2} = 0 ,
  \end{gathered}
\]
the latter because $\binom{s+2}{2} - \binom{s+1}{2} = s+1$. A convex function vanishing at two consecutive integers is strictly positive outside them and monotone on either side. For the three values,
\[
  \begin{aligned}
    p(s) &= 1 - (s+1) + \binom{s+1}{2} \\
      &= \binom{s+1}{2} - s = \binom{s}{2}, \\
    p(s-1) &= -(s+1) + 2\binom{s+1}{2} = s^2-1 ,
  \end{aligned}
\]
and, using $\binom{s+3}{s} = \binom{s+3}{3}$,
\begin{align*}
  p(s+3) &= \binom{s+3}{3} - (s+1) - 2\binom{s+1}{2} \\
    &= (s+1)\left(\frac{(s+3)(s+2)}{6} - 1 - s\right) \\
    &= \frac{(s+1)s(s-1)}{6} = \binom{s+1}{3} . \qedhere
\end{align*}
\end{proof}

\begin{lemma}[Classification of optimal profiles]\label{lem:rigid}
Let $n = T - c$ and suppose a valid system on $n$ columns has exactly $(s+1)n + a$ ones, where $a = \lfloor 2c/s \rfloor$. Write $d_1,\dots,d_n$ for the column sizes and $\rho(S) = \lambda - \cov(S) \ge 0$ for $|S| = s$. Then the total slack
\begin{equation}\label{eq:Dpen}
  D \;:=\; \sum_{|S|=s} \rho(S) \;=\; \sigma(r) - \sum_{j=1}^{n} p(d_j) \;\ge\; 0 ,
\end{equation}
so $\sum_j p(d_j) \le \sigma(r)$. Call a column \emph{exceptional} if its size lies outside $\{s+1,s+2\}$. Then:
\begin{enumerate}
\item if $s$ is even there are no exceptional columns: every column has size $s+1$ or $s+2$, exactly $a$ of them have size $s+2$, and $D = \sigma(r)$;
\item if $s$ is odd there is at most one exceptional column. If $s \ge 5$ its size is $s$; if $s = 3$ its size is $3$ or $6$. In either case $D = \sigma(r) - p(v)$, where $v$ is the size of the exceptional column if there is one, and $D = \sigma(r)$ if there is not.
\end{enumerate}
\end{lemma}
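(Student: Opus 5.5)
The plan is to derive the identity \eqref{eq:Dpen} by direct substitution, and then obtain the classification by comparing the smallest positive penalties from Lemma~\ref{lem:pen} against an explicit upper bound on $\sigma(r)$.

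\emph{Step 1: the identity.} I double count as in \eqref{eq:budget}, but keep the slack $\rho(S)$ instead of discarding it:
\[
D=\sum_{|S|=s}\rho(S)=\lambda\binom{m}{s}-\sum_j\binom{d_j}{s}=(s+1)T-\sum_j\binom{d_j}{s}.
\]
By \eqref{eq:pen},
\[
\binom{d_j}{s}=p(d_j)+(s+1)+\binom{s+1}{2}(d_j-s-1).
\]
Summing over $j$ and using $\sum_j(d_j-s-1)=a$ gives
\[
D=(s+1)c-\binom{s+1}{2}a-\sum_j p(d_j).
\]
Now write $c=qs+r$, so that $a=\lfloor 2c/s\rfloor=2q+\lfloor 2r/s\rfloor$. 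The $q$-terms cancel, since $(s+1)qs=\binom{s+1}{2}\cdot 2q$, and what remains is exactly $\sigma(r)-\sum_j p(d_j)$. Finally $D\ge 0$ because every $\rho(S)\ge0$.

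\emph{Step 2: bounding the slack.} I compute $\max_r\sigma(r)$ directly from \eqref{eq:slack}, without appealing to Theorem~\ref{thm:D}, whose proof may come later.
\begin{itemize}
\item For $r<s/2$ the floor vanishes and $\sigma(r)=(s+1)r$.
\item For $r\ge s/2$ we get $\sigma(r)=(s+1)(2r-s)/2$.
\end{itemize}
For even $s$ both branches are at most $(s+1)(s-2)/2$. For odd $s$ the maximum is $(s^2-1)/2$, attained at $r=(s-1)/2$.

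\emph{Step 3: classification.} By Lemma~\ref{lem:pen}, $p$ vanishes only at $s+1$ and $s+2$, decreases below $s+1$ and increases above $s+2$. Hence every exceptional column has penalty at least
\[
\min\{p(s),p(s+3)\}=\min\Bigl\{\binom{s}{2},\binom{s+1}{3}\Bigr\}=\binom{s}{2},
\]
where the last equality uses $s\ge3$.
\begin{enumerate}
\item \emph{Even $s$.} Here $\binom{s}{2}=(s^2-s)/2>(s^2-s-2)/2\ge\sigma(r)$, so no exceptional column fits inside the bound $\sum_j p(d_j)\le\sigma(r)$. All columns therefore have size $s+1$ or $s+2$, and $a=\sum_j e_j$ counts the columns of size $s+2$. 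Since every penalty is zero, $D=\sigma(r)$.
\item \emph{Odd $s$.} Two exceptional columns would cost at least $s(s-1)>(s^2-1)/2$, because $s^2-2s+1>0$. So there is at most one exceptional column, of size $v$ with $p(v)\le(s^2-1)/2$.
\begin{itemize}
\item Sizes $v\le s-1$ are excluded, since $p(s-1)=s^2-1$ is already too large.
\item $p(s+3)=\binom{s+1}{3}\le(s^2-1)/2$ holds if and only if $s\le3$. So for $s\ge5$ the only possibility is $v=s$.
\item For $s=3$ I check $p(7)=35-4-18=13>4$, leaving $v\in\{3,6\}$.
\end{itemize}
The formula $D=\sigma(r)-p(v)$ then follows from \eqref{eq:Dpen}.
\end{enumerate}

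The main obstacle, though a mild one, is these odd-$s$ inequality checks. In particular one must verify that $p(s+3)$ exceeds $\sigma_{\max}$ precisely when $s\ge5$, which is what separates the case $s=3$ from the rest.
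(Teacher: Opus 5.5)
Your proposal is correct and follows the paper's proof essentially step for step. You derive the identity by substituting the penalty decomposition of $\binom{d_j}{s}$ into $D=(s+1)T-\sum_j\binom{d_j}{s}$ and cancelling the $q$-terms, then classify by comparing the minimum exceptional penalty $\binom{s}{2}$ with $\sigma_{\max}$.

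The differences from the paper are minor. You compute $\sigma_{\max}$ inline rather than citing Theorem~\ref{thm:D}(2). You also explicitly rule out $v=7$ when $s=3$ via $p(7)=13>4$, a check the paper leaves implicit. Finally, $\binom{s+1}{3}\ge\binom{s}{2}$ already holds for $s\ge 2$, so your argument covers $s=2$ as well.
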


\begin{proof}
Summing \eqref{eq:pen} over all columns and using $\sum_j d_j = (s+1)n + a$ gives
$\sum_j p(d_j) = \sum_j \binom{d_j}{s} - (s+1)n - \binom{s+1}{2}a$.
On the other hand $D = \lambda\binom{m}{s} - \sum_j \binom{d_j}{s} = (s+1)T - \sum_j \binom{d_j}{s}$ by \eqref{eq:T}. Adding,
\[
  \begin{aligned}
    \sum_j p(d_j) + D
      &\;=\; (s+1)(T-n) - \binom{s+1}{2}a \\
      &\;=\; (s+1)c - \binom{s+1}{2}a .
  \end{aligned}
\]
Write $c = qs+r$, so $a = 2q + \lfloor 2r/s \rfloor$. The terms in $q$ cancel, since $(s+1)qs = \binom{s+1}{2}\cdot 2q$, and what remains is $(s+1)r - \binom{s+1}{2}\lfloor 2r/s \rfloor = \sigma(r)$. This is \eqref{eq:Dpen}, and $D \ge 0$ because every $\rho(S) \ge 0$.

Let $k$ be the number of exceptional columns. By Lemma~\ref{lem:pen} each contributes at least $\min\{p(s),p(s+3)\} = \binom{s}{2}$, the minimum being $p(s)$ because $\binom{s+1}{3} \ge \binom{s}{2}$ for $s \ge 2$. Hence $k\binom{s}{2} \le \sum_j p(d_j) \le \sigma(r) \le \sigma_{\max}$, where $\sigma_{\max}$ is the quantity evaluated in Theorem~\ref{thm:D}(2), whose proof does not use this lemma.

If $s$ is even then $\sigma_{\max} = \tfrac{(s+1)(s-2)}{2} < \tfrac{s(s-1)}{2} = \binom{s}{2}$, because $(s+1)(s-2) = s^2-s-2 < s^2-s$. So $k = 0$; every column has size $s+1$ or $s+2$, and since $\sum_j (d_j - s-1) = a$ exactly $a$ of them have size $s+2$. This proves (1).

If $s$ is odd then $\sigma_{\max} = \tfrac{s^2-1}{2} < s(s-1) = 2\binom{s}{2}$, since $s^2-1 < 2s^2-2s$ rearranges to $(s-1)^2 > 0$. So $k \le 1$. Suppose $k = 1$, with exceptional column of size $v$; then $p(v) \le \sigma_{\max} = \tfrac{(s+1)(s-1)}{2}$. Since $p$ increases as $v$ moves away from $\{s+1,s+2\}$, it is enough to exclude $v = s-1$ on the left and, when $s \ge 5$, $v = s+3$ on the right. For the first, $p(s-1) = s^2-1 > \tfrac{s^2-1}{2}$. For the second,
\[
  \frac{p(s+3)}{\sigma_{\max}} \;=\; \frac{(s+1)s(s-1)/6}{(s+1)(s-1)/2} \;=\; \frac{s}{3} ,
\]
which exceeds $1$ precisely when $s > 3$. So for $s \ge 5$ the only possible exceptional size is $v = s$, while for $s = 3$ both $v = 3$ and $v = 6$ remain, the latter with $p(6) = \binom{4}{3} = 4 = \sigma_{\max}$. The formula for $D$ is \eqref{eq:Dpen}.
\end{proof}

\begin{proof}[Proof of Theorem~\ref{thm:D}]
(1) If $2r < s$ then $\lfloor 2r/s \rfloor = 0$ and $\sigma(r) = (s+1)r$, which is $0$ at $r = 0$ and at least $s+1 > s \ge d$ for $r \ge 1$; no residue with $1 \le \sigma(r) < d$ arises. Otherwise $s \le 2r < 2s$, so $\lfloor 2r/s \rfloor = 1$ and, putting $w = 2r-s \ge 0$,
\[
  \sigma(r) \;=\; (s+1)r - \frac{s(s+1)}{2} \;=\; \frac{(s+1)w}{2} .
\]
If $s$ is odd then $d = s$, and $1 \le \sigma(r) < s$ forces $w \ge 1$ and $w < 2s/(s+1) < 2$, hence $w = 1$; as $w = 2r-s$ has the parity of $s$ this is achievable, and gives $r = (s+1)/2$ with $\sigma(r) = (s+1)/2$. If $s$ is even then $d = s/2$ and $w$ is even, so either $w = 0$ and $\sigma(r) = 0$, or $w \ge 2$ and $\sigma(r) \ge s+1 > s/2 = d$; no residue qualifies.

(2) On the range $2r < s$ the quantity $\sigma(r) = (s+1)r$ is largest at the greatest admissible $r$, namely $r = \tfrac{s-1}{2}$ for odd $s$ and $r = \tfrac{s}{2}-1$ for even $s$, with values $\tfrac{s^2-1}{2}$ and $\tfrac{(s+1)(s-2)}{2}$. On the range $2r \ge s$ we have $\sigma(r) = \tfrac{(s+1)w}{2}$ with $w = 2r-s \le s-2$, largest at $r = s-1$ with value $\tfrac{(s+1)(s-2)}{2}$. For odd $s$ the first range wins, since $\tfrac{s^2-1}{2} - \tfrac{(s+1)(s-2)}{2} = \tfrac{s+1}{2} > 0$; for even $s$ the two agree.

(3) That $\mu \ne 0$ requires $d \nmid \lambda$ is immediate, since $d \mid \lambda$ would give $d \mid \lambda\binom{m-1}{s-1}$. Conversely suppose $d \nmid \lambda$ and $m \equiv s \pmod{(s-1)!\,d}$. Then $m-1-i \equiv s-1-i \pmod{(s-1)!\,d}$ for each $0 \le i \le s-2$, so multiplying these $s-1$ congruences gives
\[
  \prod_{i=0}^{s-2}(m-1-i) \;\equiv\; (s-1)! \pmod{(s-1)!\,d} .
\]
Dividing by $(s-1)!$, which divides both sides and the modulus, yields $\binom{m-1}{s-1} \equiv 1 \pmod{d}$, whence $\mu \equiv \lambda \not\equiv 0 \pmod d$. The congruence class is nonempty in the admissible $m$ because admissibility, $(s+1) \mid \lambda\binom{m}{s}$, is itself a congruence condition on $m$; one intersects the two.

(4) For $s \ge 3$ we have $d \ge s/2 \ge 3/2$, so $d \ge 2$; and $d = \gcd(2,3) = 1$ for $s = 2$.
\end{proof}

In the situation relevant to Theorem~\ref{thm:C}, namely $s$ odd and $c \equiv \tfrac{s+1}{2} \pmod s$,
\begin{equation}\label{eq:asigma}
  a = \left\lfloor \frac{2c}{s} \right\rfloor = 2q+1, \qquad \sigma(r) = \frac{s+1}{2} ,
\end{equation}
and the profile is rigid: $\tfrac{s+1}{2} < \binom{s}{2}$ for $s \ge 3$, so Lemma~\ref{lem:rigid} permits no exceptional column and $D = \tfrac{s+1}{2}$.

\section{The link congruence}\label{sec:congruence}

For $x \in [m]$ define the \emph{local slack}
\[
  \rho_x \;=\; \sum_{\substack{|S| = s \\ x \in S}} \rho(S) .
\]
Each $s$-set contains exactly $s$ points, so summing over $x$ counts each $\rho(S)$ exactly $s$ times:
\begin{equation}\label{eq:sum}
  \sum_{x \in [m]} \rho_x \;=\; s\,D .
\end{equation}
Also $\rho_x$ is a partial sum of $\sum_S \rho(S) = D$, whence
\begin{equation}\label{eq:cap}
  0 \;\le\; \rho_x \;\le\; D \qquad \text{for every } x \in [m] .
\end{equation}

\begin{lemma}[Link congruence]\label{lem:link}
Assume the situation of Lemma~\ref{lem:rigid} and let $d$ be as in \eqref{eq:d}. Let $C_0$ be the exceptional column if one exists, say of size $v$, and put $w = \binom{v-1}{s-1} \bmod d$; if there is no exceptional column set $C_0 = \emptyset$. Then for every $x \in [m]$,
\[
  \rho_x \;\equiv\;
  \begin{cases}
    \mu - w \pmod{d}, & x \in C_0, \\[2pt]
    \mu \pmod{d}, & x \notin C_0 .
  \end{cases}
\]
In particular, if there is no exceptional column then $\rho_x \equiv \mu \pmod d$ for every $x \in [m]$.
\end{lemma}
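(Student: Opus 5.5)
The plan is a point-link double count at a fixed $x \in [m]$. The $s$-sets containing $x$ are in bijection with the $(s-1)$-subsets of $[m]\setminus\{x\}$, so there are $\binom{m-1}{s-1}$ of them. Summing $\rho(S) = \lambda - \cov(S)$ over these sets gives
\[
  \rho_x \;=\; \lambda\binom{m-1}{s-1} \;-\; \sum_{\substack{|S|=s\\ x\in S}} \cov(S).
\]
The first term is $\equiv \mu \pmod d$ by the definition \eqref{eq:mu}. It therefore suffices to compute the second term modulo $d$.

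For the second term, swap the order of summation. Each column $C_j$ contributes to $\cov(S)$ for every $s$-set $S$ with $x \in S \subseteq C_j$. There are $\binom{|C_j|-1}{s-1}$ such sets when $x \in C_j$, and none otherwise. Hence
\[
  \sum_{\substack{|S|=s\\ x\in S}} \cov(S) \;=\; \sum_{j \,:\, x \in C_j} \binom{d_j-1}{s-1}.
\]

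Now invoke Lemma~\ref{lem:rigid}. Every column other than $C_0$ has size $s+1$ or $s+2$. A column of size $s+1$ through $x$ contributes $\binom{s}{s-1} = s$, and one of size $s+2$ contributes $\binom{s+1}{s-1} = \binom{s+1}{2}$. Both are divisible by $d = \gcd\bigl(s,\binom{s+1}{2}\bigr)$ from \eqref{eq:d}, so every non-exceptional column contributes $0 \bmod d$. If $x \notin C_0$, which includes the case of no exceptional column, the sum is therefore $\equiv 0$ and $\rho_x \equiv \mu$. If $x \in C_0$ with $|C_0| = v$, the only surviving term is $\binom{v-1}{s-1} \equiv w \pmod d$, giving $\rho_x \equiv \mu - w$.

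There is no real obstacle; the whole argument rests on the divisibility observation that motivated $d$. The one point needing care is that Lemma~\ref{lem:rigid} actually covers all columns, so that at most the single column $C_0$ escapes the divisibility. That lemma guarantees this in both parities: for even $s$ there are no exceptional columns at all, and for odd $s$ there is at most one.
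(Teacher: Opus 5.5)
Your proposal is correct and is the paper's proof: the same double count of pairs $(S,j)$ with $x \in S \subseteq C_j$, with Lemma~\ref{lem:rigid} ensuring that every column except possibly $C_0$ contributes a multiple of $d$. Your remark that Lemma~\ref{lem:rigid} leaves at most one exceptional column in either parity is exactly the input the argument needs.
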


\begin{proof}
Count in two ways the pairs $(S,j)$ with $|S| = s$, $x \in S$ and $S \subseteq C_j$.

Grouping by $S$, and using that there are $\binom{m-1}{s-1}$ such $S$, the count is $\sum_{S \ni x} \cov(S) = \lambda\binom{m-1}{s-1} - \rho_x$.

Grouping by $j$, a column $C_j$ containing $x$ contributes the number of $s$-subsets of $C_j$ through $x$, namely $\binom{d_j - 1}{s-1}$. By Lemma~\ref{lem:rigid} every column except possibly one has size $s+1$ or $s+2$, contributing
\[
  \binom{s}{s-1} = s \qquad\text{or}\qquad \binom{s+1}{s-1} = \binom{s+1}{2} = \frac{s(s+1)}{2}
\]
respectively, and both are divisible by $d$ by \eqref{eq:d}. The exceptional column, if it exists and contains $x$, contributes $\binom{v-1}{s-1} \equiv w \pmod d$. Hence the total is congruent modulo $d$ to $w$ if $x \in C_0$ and to $0$ otherwise, and comparing the two expressions gives the claim.
\end{proof}

The next lemma is short but it is what keeps Theorem~\ref{thm:B} consistent with the classical design criterion, and it is used twice below.

\begin{lemma}[Design lemma]\label{lem:design}
Let $s \ge 3$ and let $k \in \{s+1,s+2\}$. If an $s$-$(m,k,\lambda)$ design exists, then $\mu = 0$.
\end{lemma}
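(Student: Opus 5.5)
The idea is to use the point-link double count from Lemma~\ref{lem:link}, applied now to a genuine design instead of an extremal system. Suppose an $s$-$(m,k,\lambda)$ design exists, with $k\in\{s+1,s+2\}$. Every $s$-set is then covered exactly $\lambda$ times. Fix a point $x\in[m]$ and count the pairs $(S,B)$ with $x\in S\subseteq B$, $|S|=s$ and $B$ a block.

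Grouping by $S$ gives $\lambda\binom{m-1}{s-1}$, because there are $\binom{m-1}{s-1}$ such $S$ and each lies in exactly $\lambda$ blocks. Grouping by $B$ gives $r_x\binom{k-1}{s-1}$, where $r_x$ is the number of blocks through $x$. So
\[
  \lambda\binom{m-1}{s-1} \;=\; r_x\binom{k-1}{s-1}.
\]
This identity is the standard fact that an $s$-design is also an $(s-1)$-design. Notice that we do not need $r_x$ to be an integer constant independent of $x$: one point $x$ suffices, and $m\ge 1$.

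The right-hand side is an integer multiple of $\binom{k-1}{s-1}$. For $k=s+1$ this is $\binom{s}{s-1}=s$, and for $k=s+2$ it is $\binom{s+1}{s-1}=\binom{s+1}{2}$. By \eqref{eq:d}, $d$ divides both numbers, so $d$ divides $\lambda\binom{m-1}{s-1}$. Hence $\mu=0$ by \eqref{eq:mu}.

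We do not use the hypothesis $s\ge3$; it only ensures that $d\ge2$, so that the statement has content (Theorem~\ref{thm:D}(4)). The argument is short and has no real obstacle. The one point to state carefully is that the design is nonempty, so that some point lies in a block. Even that is unnecessary, because the identity holds with $r_x=0$ as well.
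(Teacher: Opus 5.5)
Your proof is correct and is essentially the paper's argument. The paper quotes the integrality of the replication number $\lambda_1 = \lambda\binom{m-1}{s-1}/\binom{k-1}{s-1}$, which you derive directly by the point-link double count; both arguments then finish by noting that $d$ divides both $s$ and $\binom{s+1}{2}$.
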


\begin{proof}
In an $s$-$(m,k,\lambda)$ design the number of blocks through a fixed point is
\[
  \lambda_1 \;=\; \frac{\lambda\binom{m-1}{s-1}}{\binom{k-1}{s-1}} \;\in\; \Z ,
\]
so $\binom{k-1}{s-1}$ divides $\lambda\binom{m-1}{s-1}$. For $k = s+1$ we have $\binom{k-1}{s-1} = \binom{s}{s-1} = s$, and for $k = s+2$ we have $\binom{k-1}{s-1} = \binom{s+1}{s-1} = \binom{s+1}{2}$. By \eqref{eq:d}, $d$ divides both of these. Hence $d \mid \lambda\binom{m-1}{s-1}$, that is $\mu = 0$.
\end{proof}

\begin{remark}\label{rem:designhyp}
Lemma~\ref{lem:design} has two consequences worth stating. First, hypothesis \eqref{eq:hyp2} is never available when an $s$-$(m,s+1,\lambda)$ or $s$-$(m,s+2,\lambda)$ design exists, since it requires $\mu \ne 0$. Second, the side condition in \eqref{eq:hyp1} is automatic in that situation: $m\mu = 0$ while $s\sigma(r) \ge s \ge 3$. So it is exactly the first alternative that operates in the presence of a design, and it operates unconditionally there.
\end{remark}

\begin{remark}[Parity]\label{rem:parity}
For even $s$ we have $\binom{s+1}{2} = \tfrac{s}{2}(s+1) \equiv \tfrac{s}{2} \pmod s$, so the two efficient column sizes are \emph{not} indistinguishable modulo $s$ and the congruence degrades to one modulo $s/2$. By Theorem~\ref{thm:D}(1) this removes hypothesis \eqref{eq:hyp1} entirely, since no residue then has $1 \le \sigma(r) < d$. Hypothesis \eqref{eq:hyp2} is unaffected; see Remark~\ref{rem:even}.
\end{remark}

\begin{remark}[What replaces the congruence when $s = 2$]\label{rem:chm}
For $s = 2$ the corresponding obstruction is a parity statement rather than a congruence. Chen, Horsley and Mammoliti \cite[Lem.~2.3]{ChenHorsleyMammoliti2024b} observe that the defect of a linear $3$-hypergraph is an even graph, and contradict this using the fact that a short edge-disjoint union of complete graphs on evenly many vertices cannot be even \cite[Lem.~2.4]{ChenHorsleyMammoliti2024b}. That step is exact but local: it needs the forced profile to contain only a bounded number of oversized edges, so it reaches only boundedly many deficiencies below the threshold. Lemma~\ref{lem:link} replaces parity by a congruence modulo $d$, and \eqref{eq:sum} controls the local slacks uniformly in $c$. That is what turns finitely many exceptions into a full residue class, and, when $\mu \ne 0$, into the whole range.
\end{remark}

\section{Proof of Theorem~\ref{thm:B}}\label{sec:main}

Let $s \ge 3$, $t \ge 2$, $\lambda = t-1$, and let $m \ge s+2$ be admissible, with $T$, $d$, $\mu$ as in \eqref{eq:T}, \eqref{eq:d}, \eqref{eq:mu}. Fix $c$ with $1 \le c \le \tfrac{s}{s+2}T$, write $c = qs + r$ with $0 \le r < s$, and put $n = T-c$ and $a = \lfloor 2c/s \rfloor$.

By Theorem~\ref{thm:A}, $\Rom(m,n) = (s+1)n + a$. Suppose for contradiction that some valid system on $n$ columns attains $(s+1)n + a$ ones. By Lemma~\ref{lem:rigid} its profile belongs to a short explicit list, and we go through the list. The same mechanism is used each time, so we isolate it first.

\begin{lemma}[Counting the local slacks]\label{lem:count}
Suppose that for each $x \in [m]$ the integer $\rho_x$ satisfies $\rho_x \ge 0$ and $\rho_x \equiv \varepsilon_x \pmod d$ with $0 \le \varepsilon_x < d$. If $\sum_{x} \varepsilon_x > sD$, then $\sum_x \rho_x = sD$ is impossible.
\end{lemma}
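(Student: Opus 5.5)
The plan is a direct termwise comparison, with no appeal to the earlier lemmas beyond the setup. Suppose, for contradiction, that $\sum_x \rho_x = sD$. For each $x$, the integer $\rho_x$ is non-negative and congruent to $\varepsilon_x$ modulo $d$, where $0 \le \varepsilon_x < d$. I will first show that $\rho_x \ge \varepsilon_x$ for every $x$. Write $\rho_x = \varepsilon_x + d\,k_x$ with $k_x \in \Z$. If $k_x \le -1$, then $\rho_x \le \varepsilon_x - d < 0$, which contradicts $\rho_x \ge 0$. Hence $k_x \ge 0$, and so $\rho_x \ge \varepsilon_x$. In other words, $\varepsilon_x$ is the least non-negative integer in the residue class of $\rho_x$.

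Next I sum this inequality over $x \in [m]$. This gives $sD = \sum_x \rho_x \ge \sum_x \varepsilon_x$, which directly contradicts the hypothesis $\sum_x \varepsilon_x > sD$. So $\sum_x \rho_x = sD$ is impossible. In the application, the identity $\sum_x \rho_x = sD$ is the double count \eqref{eq:sum}, and the residues $\varepsilon_x$ come from Lemma~\ref{lem:link}. The lemma as stated, however, is purely arithmetic, and the argument above uses nothing else.

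There is no genuine obstacle here; the only point needing care is the normalisation $0 \le \varepsilon_x < d$. That is exactly what makes $\varepsilon_x$ the minimal non-negative representative of its class, and hence a lower bound for $\rho_x$. Note also that the cap \eqref{eq:cap} is not needed for this direction.
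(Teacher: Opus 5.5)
Your proof is correct and is essentially the paper's argument: the normalisation $0 \le \varepsilon_x < d$ makes $\varepsilon_x$ the least non-negative representative of its residue class, so $\rho_x \ge \varepsilon_x$. Summing gives $\sum_x \rho_x \ge \sum_x \varepsilon_x > sD$, and you are right that the cap \eqref{eq:cap} plays no role here.
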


\begin{proof}
A non-negative integer congruent to $\varepsilon_x$ modulo $d$ is at least $\varepsilon_x$, since $0 \le \varepsilon_x < d$. Summing, $\sum_x \rho_x \ge \sum_x \varepsilon_x > sD$.
\end{proof}

Recall from \eqref{eq:sum} and \eqref{eq:cap} that $\sum_x \rho_x = sD$ and $0 \le \rho_x \le D$.

\smallskip\noindent\textbf{Case 1: hypothesis \eqref{eq:hyp1} holds}, so $1 \le \sigma(r) < d$ and $m\mu \ne s\sigma(r)$.

Since $d \le s \le \binom{s}{2}$ for $s \ge 3$, we have $\sigma(r) < \binom{s}{2}$, so by Lemma~\ref{lem:rigid} no exceptional column occurs: every column has size $s+1$ or $s+2$, exactly $a$ of them have size $s+2$, and $D = \sigma(r)$. By Lemma~\ref{lem:link}, $\rho_x \equiv \mu \pmod d$ for every $x$, and by \eqref{eq:cap}, $0 \le \rho_x \le \sigma(r) < d$. An interval of fewer than $d$ consecutive integers starting at $0$ contains at most one representative of each class modulo $d$. Hence either $\mu > \sigma(r)$, in which case no legal value of $\rho_x$ exists at all and we are done; or $\mu \le \sigma(r)$ and $\rho_x = \mu$ for \emph{every} $x \in [m]$, in which case \eqref{eq:sum} gives
\[
  m\mu \;=\; \sum_{x \in [m]} \rho_x \;=\; sD \;=\; s\,\sigma(r) ,
\]
contrary to hypothesis. Either way we have a contradiction.

\smallskip\noindent\textbf{Case 2: hypothesis \eqref{eq:hyp2} holds}, so $\mu \ne 0$ and $m\mu > s\,\sigma(r)$.

Now $\sigma(r)$ may be large and we must allow every profile that Lemma~\ref{lem:rigid} permits.

\smallskip\noindent\emph{Case 2a: no exceptional column.} Then $D = \sigma(r)$, and Lemma~\ref{lem:link} gives $\rho_x \equiv \mu \pmod d$ for every $x$, so $\varepsilon_x = \mu$ throughout and $\sum_{x} \varepsilon_x = m\mu > s\sigma(r) = sD$. Lemma~\ref{lem:count} contradicts \eqref{eq:sum}.

\smallskip\noindent\emph{Case 2b: one exceptional column $C_0$, of size $v$.} By Lemma~\ref{lem:rigid} this happens only for odd $s$, with $v = s$ when $s \ge 5$ and $v \in \{3,6\}$ when $s = 3$; and $D = \sigma(r) - p(v)$. In each case the correction term of Lemma~\ref{lem:link} equals $1$:
\[
  \binom{s-1}{s-1} = 1, \qquad \binom{2}{2} = 1, \qquad \binom{5}{2} = 10 \equiv 1 \!\!\pmod 3 ,
\]
the three values corresponding to $v = s$, to $v = 3$ with $s = 3$, and to $v = 6$ with $s = 3$. So $w = 1$, and Lemma~\ref{lem:link} gives $\varepsilon_x = \mu - 1$ for the $v$ points of $C_0$ --- legitimately, since $\mu \ge 1$ --- and $\varepsilon_x = \mu$ for the other $m - v$ points. Therefore
\[
  \sum_{x \in [m]} \varepsilon_x \;=\; (m-v)\mu + v(\mu-1) \;=\; m\mu - v .
\]
This exceeds $sD = s\sigma(r) - s\,p(v)$ provided $v \le s\,p(v)$, since $m\mu > s\sigma(r)$ by hypothesis. And $v \le s\,p(v)$ holds in all three cases: for $v = s$, $s\,p(s) = s\binom{s}{2} \ge s$; for $v = 3$, $s = 3$, $3p(3) = 9 \ge 3$; for $v = 6$, $s = 3$, $3p(6) = 12 \ge 6$. Lemma~\ref{lem:count} again contradicts \eqref{eq:sum}.

\smallskip
Every permitted profile is contradictory, so $z(m,n;s,t) \le (s+1)n + a - 1 = \Rom(m,n) - 1$. \qed

\begin{remark}[The argument in one paragraph]\label{rem:summary}
Below the design threshold, Roman's bound can only be met by a system whose columns almost all have size $s+1$ or $s+2$ and which leaves a small, exactly determined amount of coverage unused. Both efficient column sizes see each point through a number of $s$-sets divisible by $d$, so the unused coverage at each point is pinned modulo $d$ to a single global residue $\mu$. There are then two ways to win. If the total unused coverage is smaller than $d$, it is too small to give every point the residue $\mu$ and too large to give every point zero. If instead $\mu \ne 0$, every one of the $m$ points must absorb at least $\mu$ units, and for large $m$ there is not enough unused coverage in the system to pay for that.
\end{remark}

\begin{remark}[The endpoint is a Roman point]\label{rem:endpoint}
The only Roman point \eqref{eq:romanpoint} in the half-open interval $\tfrac{2}{s+2}T \le n < T$ is the left endpoint $n_{s+2} = \tfrac{2}{s+2}T$, which lies in the range of Theorem~\ref{thm:B} exactly when $c = \tfrac{s}{s+2}T$ is an integer. There, by the criterion of \cite[\S2]{DaviesGillHorsley2026}, Roman's bound is attained if and only if an $s$-$(m,s+2,\lambda)$ design exists. This never conflicts with Theorem~\ref{thm:B}. Hypothesis \eqref{eq:hyp2} cannot hold there, because such a design forces $\mu = 0$ by Lemma~\ref{lem:design}. Hypothesis \eqref{eq:hyp1} cannot hold there either: it requires $s$ odd by Theorem~\ref{thm:D}(1), and for odd $s$ we have $\gcd(s,s+2) = 1$, so $c = sT/(s+2) \in \Z$ forces $(s+2) \mid T$ and hence $s \mid c$, giving $r = 0$ and $\sigma(0) = 0$. At every other $n$ in the range the piecewise-linear minimum is affine, so by Theorem~\ref{thm:A} the bound there is the counting bound and no design criterion applies.
\end{remark}

\begin{remark}[Sharpness of the hypotheses]\label{rem:sharp}
Hypotheses \eqref{eq:hyp1} and \eqref{eq:hyp2} are sufficient but not necessary. The full strength of the method is the assertion that the system
\[
  \rho_x \equiv \varepsilon_x \!\!\pmod d, \qquad 0 \le \rho_x \le D, \qquad \sum_{x} \rho_x = sD
\]
has no integer solution, for every profile permitted by Lemma~\ref{lem:rigid}. This can fail to be witnessed by Lemma~\ref{lem:count} while still failing for another reason: because $\sum_x \varepsilon_x \not\equiv 0 \pmod d$, or because $\sum_x \bigl(\varepsilon_x + d\lfloor (D - \varepsilon_x)/d \rfloor\bigr)$, the largest admissible total, falls below $sD$. Both refinements are elementary and both bite: for $s = 5$, $t = 3$, $m = 10$ the sharp test excludes the residues $r \in \{0,1,3\}$ whereas \eqref{eq:hyp1} and \eqref{eq:hyp2} between them reach only $r \in \{0,3\}$.
\end{remark}

\section{Exact values: proof of Theorem~\ref{thm:C}}\label{sec:exact}

Take $c = \tfrac{s+1}{2}$. We first check that this $c$ lies in the range of Theorem~\ref{thm:A}, that is, that $\tfrac{s+1}{2} \le \tfrac{s}{s+2}T$, or equivalently
\begin{equation}\label{eq:Crange}
  T \;\ge\; \frac{(s+1)(s+2)}{2s} .
\end{equation}
By hypothesis $m \ge s+2$. If $m = s+2$ then $T = \lambda\binom{s+2}{s}/(s+1) = \lambda(s+2)/2$, and admissibility requires $T \in \Z$; since $s$ is odd, $s+2$ is odd, so $\lambda$ must be even and $T \ge s+2$. If $m \ge s+3$ then $T \ge \lambda\binom{s+3}{s}/(s+1) = \lambda(s+2)(s+3)/6 \ge (s+2)(s+3)/6$. In both cases \eqref{eq:Crange} holds, since $2s(s+2) > (s+1)(s+2)$ and $s(s+3) > 3(s+1)$ for $s \ge 3$. So Theorem~\ref{thm:A} applies. Now $q = 0$ and $a = 1$ in \eqref{eq:asigma}, and $r = c = \tfrac{s+1}{2}$ since $c < s$ for $s \ge 3$. Then $\sigma(r) = \tfrac{s+1}{2}$, and $1 \le \sigma(r) < s = d$ because $s \ge 3$. The design hypothesis and Lemma~\ref{lem:design} give $\mu = 0$, so $m\mu = 0 \ne \tfrac{s(s+1)}{2} = s\sigma(r)$ and hypothesis \eqref{eq:hyp1} holds. Theorem~\ref{thm:B} therefore gives $z(m,T-c;s,t) \le (s+1)(T-c)$.

For the matching construction, let $\mathcal{D}$ be an $s$-$(m,s+1,\lambda)$ design, that is, a collection of $T$ blocks of size $s+1$ in which every $s$-subset of $[m]$ lies in exactly $\lambda$ blocks. Delete any $c$ blocks and use the remaining $n = T-c$ blocks as columns. Deleting blocks only decreases coverage, so every $s$-set is covered at most $\lambda$ times and the system is valid; it has $(s+1)(T-c) = (s+1)n$ ones. Hence
\[
  z(m,T-c;s,t) = (s+1)n = \Rom(m,T-c) - 1 . \qed
\]

\begin{remark}[The case $q \ge 1$]\label{rem:q}
For $c \equiv \tfrac{s+1}{2} \pmod s$ with $q \ge 1$, Theorem~\ref{thm:B} still gives the upper bound $(s+1)n + a - 1$, but a matching construction would require deleting $c$ blocks from a design and then enlarging $a - 1 = 2q$ of the survivors by one point each so that all newly created $s$-sets fall inside the slack. The counting is favourable: deletion frees $(s+1)c$ units of coverage while the enlargements consume $2q\binom{s+1}{2} = qs(s+1)$, leaving $(s+1)^2/2$ to spare. We do not know a general construction. The smallest instance works: for $s = t = 3$, $m = 6$, $c = 5$, Theorem~\ref{thm:B} gives $z(6,5;3,3) \le 22$, and the true value is $22$ \cite{Tan2022}. See Conjecture~\ref{conj:tight}.
\end{remark}

\section{Sharpness}\label{sec:sharp}

\subsection{The residue condition is not an artefact}

When $\mu \ne 0$, hypothesis \eqref{eq:hyp2} destroys Roman's bound at every $c$ in the range. When $\mu = 0$ the residue of $c$ is essential: the local slacks are then merely required to be divisible by $d$, which is no obstruction at all unless the total slack is itself smaller than $d$.

The smallest illustration is decisive. For $s = t = 3$ with $3 \nmid m$, so that $\mu = 0$, take $c = 1$; then $a = \lfloor 2/3 \rfloor = 0$, so Roman's bound is $4n$, and deleting a single block from a $3$-$(m,4,2)$ design achieves it. So Roman's bound is attained at $n = T-1$ whenever the design exists, and no strengthening of Theorem~\ref{thm:B} can drop the residue hypothesis from the case $\mu = 0$.

\subsection{The boundary case, and non-monotonicity}

The next result is the special case $s = t = 3$, $r = 1$ of hypothesis \eqref{eq:hyp2}, proved directly because the resulting numerical threshold is sharp and shows that the failure set need not be an interval.

\begin{proposition}\label{prop:c1}
Let $m$ be admissible for $s = t = 3$ with $3 \mid m$ and $m \ge 9$. Then
\[
  z(m,T-1;3,3) \;\le\; 4(T-1) - 1 \;=\; \Rom(m,T-1) - 1 .
\]
\end{proposition}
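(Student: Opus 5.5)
The plan is to rerun the argument of Theorem~\ref{thm:B} in the single case $s=t=3$, $c=1$, keeping every number explicit. Here $\lambda=2$, $d=3$, and $\mu\equiv 2\binom{m-1}{2}\equiv (m-1)(m-2)\pmod 3$, so $3\mid m$ gives $\mu=2$. With $c=1$ we get $q=0$, $r=1$, $a=\lfloor 2/3\rfloor=0$ and $\sigma(1)=4$. Since $T\ge 2$ for admissible $m\ge 9$, we have $c=1\le 3T/5$, so Theorem~\ref{thm:A} gives $\Rom(m,T-1)=4(T-1)$. I would then suppose that a valid system on $n=T-1$ columns has exactly $4n$ ones, and derive a contradiction.

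By Lemma~\ref{lem:rigid} there is at most one exceptional column, of size $3$ or $6$, and $\sum_j (d_j-4)=a=0$. This leaves three profiles, each settled by Lemma~\ref{lem:link} together with \eqref{eq:sum} and Lemma~\ref{lem:count}.

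\emph{No exceptional column.} All columns have size $4$ and $D=4$. Every $\rho_x\equiv 2\pmod 3$, so $\rho_x\ge 2$ and $\sum_x\rho_x\ge 2m\ge 18>12=sD$. This is a contradiction.

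\emph{Exceptional column of size $3$.} The zero-sum condition forces exactly one column of size $5$ and all others of size $4$. Here $D=4-p(3)=1$ and $w=\binom{2}{2}=1$. The $m-3$ points outside $C_0$ have $\rho_x\equiv 2$ and the three points of $C_0$ have $\rho_x\equiv 1$. So $\sum_x\rho_x\ge 2(m-3)+3>3=sD$, again a contradiction.

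\emph{Exceptional column of size $6$.} Now $D=4-p(6)=0$, so every $\rho_x=0$. But the $m-6\ge 3$ points outside $C_0$ must have $\rho_x\equiv 2\pmod 3$, which is impossible. (Zero-sum would also require two columns of size $3$, which Lemma~\ref{lem:rigid} already forbids, so this profile fails on two counts.)

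This is really just hypothesis \eqref{eq:hyp2} with $m\mu=2m>12=s\sigma(1)$, so there is no genuine obstacle. The only point needing care is the bookkeeping in the size-$3$ profile: one must use the correct residue $\mu-w=1$ on $C_0$, and check that the forced size-$5$ column contributes $\binom{4}{2}=6\equiv 0\pmod 3$ to the link count, so it does not disturb the congruence. I would also remark that $m\ge 9$ is exactly the condition $2m>12$ restricted to multiples of $3$. The failure at $m=6$ (cf.\ Corollary~\ref{cor:E}) indicates that the threshold cannot be lowered.
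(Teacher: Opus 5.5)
Your proof is correct and follows the paper's own argument essentially step for step. It uses the same reduction via Lemma~\ref{lem:rigid} to the profiles $4^n$ and $3^1 4^{n-2} 5^1$, and the same link congruence with $\mu = 2$. The differences are cosmetic: the paper rules out the size-$6$ column by the zero-sum balance alone, and it kills the size-$3$ profile with the cap $\rho_x \le D = 1$ instead of the sum bound of Lemma~\ref{lem:count}.
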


\begin{proof}
Here $c = 1$, $a = 0$ and $\sigma(1) = 4$. By Lemma~\ref{lem:rigid} at most one column is exceptional, of size $3$ or $6$, and $\sum_j (d_j - 4) = a = 0$. A column of size $6$ contributes $+2$ to that sum while every non-exceptional column contributes $0$ or $+1$, so it cannot be balanced; hence the exceptional size, if present, is $3$, and then exactly one column has size $5$. Only two profiles can meet the bound $4n$: $4^{n}$, and $3^{1}4^{n-2}5^{1}$. Since $3 \mid m$ we have $\mu \equiv (m-1)(m-2) \equiv 2 \pmod 3$.

First, the profile $4^{n}$. Here $\sum_j p(d_j) = 0$, so $D = \sigma(1) = 4$ by \eqref{eq:Dpen}, and \eqref{eq:sum} gives $\sum_x \rho_x = 12$ with $\rho_x \le 4$. Every column contributes $\binom{3}{2} = 3$ to the link count at any of its points, so Lemma~\ref{lem:link} gives $\rho_x \equiv 2 \pmod 3$, whence $\rho_x \ge 2$ for all $x$. Summing, $2m \le 12$, so $m \le 6$.

Second, the profile $3^{1}4^{n-2}5^{1}$, whose column of size $3$ is exceptional. Here $\sum_j p(d_j) = p(3) = 3$, so $D = \sigma(1) - 3 = 1$, and \eqref{eq:sum} gives $\sum_x \rho_x = 3$ with $\rho_x \le 1$. Let $A$ be the column of size $3$. Columns of size $4$ and $5$ contribute $3$ and $6$ respectively to the link count, both divisible by $3$, while $A$ contributes $\binom{2}{2} = 1$ if $x \in A$ and $0$ otherwise; so $\rho_x \equiv 2 - [x \in A] \pmod 3$. Any point outside $A$ then has $\rho_x \equiv 2 \pmod 3$ and hence $\rho_x \ge 2$, contradicting $\rho_x \le 1$. Such points exist as soon as $m > 3$, so this profile is excluded outright.

The first profile is excluded for $m \ge 7$ and the second for $m \ge 4$.
\end{proof}

The bound $m \le 6$ in the first case is sharp. For $m = 6$ we have $T = 10$, and Table~\ref{tab:m6} lists the exact values, all due to Tan \cite{Tan2022}.

\begin{table*}[t]
\centering
\renewcommand{\arraystretch}{1.15}
\begin{tabular}{l r r r r r r}
\toprule
$m = 6$, $T = 10$   & $n = 5$ & $n = 6$ & $n = 7$ & $n = 8$ & $n = 9$ & $n = 10$ \\
\midrule
$c = T-n$           & $5$  & $4$  & $3$  & $2$  & $1$  & $0$ \\
$\sigma(r)$, $r = c \bmod 3$ & $2$ & $4$ & $0$ & $2$ & $4$ & $0$ \\
Roman's bound       & $23$ & $26$ & $30$ & $33$ & $36$ & $40$ \\
$z(6,n;3,3)$        & $22$ & $26$ & $29$ & $32$ & $36$ & $39$ \\
attained?           & no   & \textbf{yes} & no & no & \textbf{yes} & no \\
hypothesis available & \eqref{eq:hyp1} & --- & \eqref{eq:hyp2} & \eqref{eq:hyp1} & --- & --- \\
\bottomrule
\end{tabular}
\medskip
\caption{Attainment is non-monotone in $n$, and the hypotheses of Theorem~\ref{thm:B} track it exactly. Here $\mu = 2$ and $d = 3$. At $c = 1$ and $c = 4$ one has $\sigma(r) = 4 \ge d$, which kills \eqref{eq:hyp1}, and $m\mu = 12 = s\sigma(r)$, which kills \eqref{eq:hyp2}; those are exactly the two columns at which Roman's bound is attained. At $c = 0$ the range condition of Theorem~\ref{thm:A} fails. At each of the three remaining $c$ the theorem applies and its bound is attained. The value $z(6,9;3,3) = 36 = \Rom(6,9)$ is exactly the case $m \le 6$ left open by the first half of Proposition~\ref{prop:c1}.}
\label{tab:m6}
\end{table*}

So for $m = 6$ Roman's bound fails at $n = 5$, is attained at $n = 6$, fails at $n = 7$ and $n = 8$, is attained again at $n = 9$, and fails at $n = 10$. Any description of the failure set as an interval, or as a single column, is therefore incorrect. The two exceptions are precisely the two values of $c$ in the range at which $m\mu = s\sigma(r)$, so the strict inequality in \eqref{eq:hyp2} and the exclusion $\sigma(r) \ge d$ in \eqref{eq:hyp1} are both doing real work rather than being artefacts of the proof. At each of the three values of $c$ in Table~\ref{tab:m6} where Theorem~\ref{thm:B} does apply, its bound is attained. It is not always tight, however: at $c = 6$, that is $n = 4$, hypothesis \eqref{eq:hyp2} gives $z(6,4;3,3) \le 19$ while the true value is $18$. This is consistent with Conjecture~\ref{conj:tight}, which predicts tightness only for $c \equiv \tfrac{s+1}{2} \pmod s$, and $6 \equiv 0 \pmod 3$. The example is $m = 6$ because that is the largest value at which Corollary~\ref{cor:E}(2) is unavailable. For admissible $m$ with $3 \mid m$ and $m \ge 9$ the picture is complete: Corollary~\ref{cor:E}(2) and the design criterion together show that Roman's bound fails at every $n$ with $\tfrac25 T \le n \le T$.

\subsection{Even $s$}

By Theorem~\ref{thm:D}(1) the two efficient column sizes are distinguishable modulo $s$ when $s$ is even, and no residue then has $1 \le \sigma(r) < d$. This removes the first alternative but not the method.

\begin{remark}\label{rem:even}
For even $s$ the modulus drops from $s$ to $d = s/2$ and hypothesis \eqref{eq:hyp1} disappears with it; hypothesis \eqref{eq:hyp2} survives untouched. By Theorem~\ref{thm:D}(3) what is required is $\mu \ne 0$, hence $d \nmid \lambda$, together with $m > s\sigma_{\max}/\mu$.

The first even case is $s = 4$, where $d = 2$ and $\sigma_{\max} = 5$, so $\lambda$ must be odd. Take $t = 2$, so $\lambda = 1$ and $\mu = \binom{m-1}{3} \bmod 2$, which equals $1$ exactly when $m \equiv 0 \pmod 4$. The threshold is $m > 4 \cdot 5 = 20$, so for every admissible $m \ge 24$ divisible by $4$, Theorem~\ref{thm:B} gives
\[
  \begin{gathered}
    z(m,T-c;4,2) \;\le\; \Rom(m,T-c) - 1 \\
    \text{for every } c \text{ with } 1 \le c \le \tfrac{4}{6}T .
  \end{gathered}
\]
At $m = 28$ we have $T = \tfrac15\binom{28}{4} = 4095$, and the conclusion holds at all $2730$ values of $n$ with $1365 \le n \le 4094$. Consistently with Lemma~\ref{lem:design}, no $4$-$(28,5,1)$ design exists: it would need $\lambda_1 = \binom{27}{3}/4 = 731.25$. For $s = 6$ the condition on $\lambda$ is $3 \nmid \lambda$, for $s = 8$ it is $4 \nmid \lambda$, and for $s = 10$ it is $5 \nmid \lambda$.
\end{remark}

What even $s$ costs is the first alternative, and with it Theorem~\ref{thm:C}, whose construction needs $(s+1)/2$ to be an integer. The cases the second alternative also misses are exactly those with $d \mid \lambda$; see Question~\ref{q:even}.

\section{Linear programming: what it sees and what it misses}\label{sec:lp}

Two linear relaxations are worth comparing against, and they behave differently. The first is the relaxation over all subsets, which we show is exactly the counting bound and therefore, by Theorem~\ref{thm:A}, exactly Roman's bound throughout our range. The second is the refined program of Davies, Gill and Horsley \cite{DaviesGillHorsley2026}, which is strictly stronger; there the situation is more delicate, and we describe it exactly.

\subsection{The subset relaxation is the counting bound}

A valid configuration is a multiset of columns, so the exact problem is the integer program: maximise $\sum_{A \subseteq [m]} |A| x_A$ over $x_A \in \Z_{\ge 0}$ subject to $\sum_A x_A \le n$ and $\sum_{A \supseteq S} x_A \le \lambda$ for every $s$-set $S$. Relaxing $x_A \ge 0$ gives the linear program
\begin{equation}\label{eq:subsetlp}
\begin{aligned}
  \mathcal{P}(m,n;s,t) \;=\; \max\ \ &\sum_{A \subseteq [m]} |A|\,x_A \\
  \text{s.t.}\ \ &\sum_{A} x_A \le n, \\
  &\sum_{A \supseteq S} x_A \le \lambda \quad (|S| = s), \\
  &x_A \ge 0 .
\end{aligned}
\end{equation}
This is the strongest relaxation available before one leaves the covering constraints: it has one variable for every subset and imposes every constraint of the original problem. It is therefore a little surprising that it is worth nothing at all.

\begin{proposition}\label{prop:subsetlp}
For all $m$, $n$, $s$, $t$, the value $\mathcal{P}(m,n;s,t)$ of \eqref{eq:subsetlp} equals the value of the counting relaxation
\begin{equation}\label{eq:countlp}
\begin{aligned}
  \max\ \ &\sum_{i} i\,n_i \\
  \text{s.t.}\ \ &\sum_i n_i \le n, \qquad n_i \ge 0, \\
  &\sum_i \binom{i}{s} n_i \le \lambda\binom{m}{s} .
\end{aligned}
\end{equation}
\end{proposition}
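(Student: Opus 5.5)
Proving $\mathcal{P}(m,n;s,t)$ equal to the value of \eqref{eq:countlp} needs two inequalities. Each is a change of variables between the two programs, and one direction uses the symmetry of \eqref{eq:subsetlp} under $S_m$.

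\emph{Subset relaxation $\le$ counting relaxation.} Let $x$ be feasible for \eqref{eq:subsetlp}. For each $i$ put $n_i = \sum_{|A| = i} x_A$. Then $\sum_i n_i = \sum_A x_A \le n$, and the objective $\sum_A |A| x_A$ equals $\sum_i i\,n_i$. For the budget constraint, sum the covering constraints over all $s$-sets $S$. Each $A$ contains exactly $\binom{|A|}{s}$ of them, so
\[
  \sum_i \binom{i}{s} n_i \;=\; \sum_{|S|=s}\ \sum_{A \supseteq S} x_A \;\le\; \lambda\binom{m}{s} .
\]
This is the linear analogue of \eqref{eq:budget}. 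Hence $(n_i)$ is feasible for \eqref{eq:countlp} with the same objective value. This direction is routine.

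\emph{Counting relaxation $\le$ subset relaxation.} Let $(n_i)$ be feasible for \eqref{eq:countlp}. First I reduce to $i \le m$, since sizes above $m$ do not correspond to subsets. I would observe that \eqref{eq:countlp} is a linear program with only two nontrivial constraints. So it has an optimal vertex with at most two nonzero $n_i$, and I must check that those can be taken with $i \le m$. If \eqref{eq:countlp} is meant with $i$ ranging over $0,\dots,m$ (as a count of column sizes), this issue disappears, and I would state that convention explicitly. Next I spread each $n_i$ uniformly over the $i$-subsets: set $x_A = n_{|A|}/\binom{m}{|A|}$. Then $\sum_A x_A = \sum_i n_i \le n$, and the objective is again $\sum_i i\,n_i$. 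The key step is the covering constraint. For a fixed $s$-set $S$, the number of $i$-sets containing $S$ is $\binom{m-s}{i-s}$, so
\[
  \sum_{A \supseteq S} x_A \;=\; \sum_i n_i \frac{\binom{m-s}{i-s}}{\binom{m}{i}} \;=\; \sum_i n_i \frac{\binom{i}{s}}{\binom{m}{s}} \;\le\; \lambda ,
\]
using the identity $\binom{m}{i}\binom{i}{s} = \binom{m}{s}\binom{m-s}{i-s}$ and the budget constraint. Thus $x$ is feasible for \eqref{eq:subsetlp} with the same value.

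This second direction is the $S_m$-symmetrisation mentioned in the introduction. Equivalently, one averages an optimal $x$ over $S_m$: this preserves feasibility and the objective by convexity, and the resulting symmetric solution is determined by its size profile. In symmetric form, all $\binom{m}{s}$ covering constraints collapse to the single budget constraint. The only point I expect to need care is the one just flagged: the index range of $i$ in \eqref{eq:countlp}, and, if $n$ is not an integer, nothing changes since both programs are continuous. Once these conventions are fixed, the two inequalities give equality of the optimal values.
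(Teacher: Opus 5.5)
Your proof is correct. The direction from \eqref{eq:countlp} to \eqref{eq:subsetlp} (spread each $n_i$ uniformly via $x_A = n_{|A|}/\binom{m}{|A|}$ and use $\binom{m}{i}\binom{i}{s} = \binom{m}{s}\binom{m-s}{i-s}$) is exactly the paper's.

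The reverse direction takes a different route.
\begin{itemize}
\item The paper averages $x$ over $S_m$, observes that $\bar{x}$ depends only on $|A|$, and then applies the covering constraint at a single $s$-set.
\item You aggregate $n_i = \sum_{|A|=i} x_A$ and sum the covering constraints over all $\binom{m}{s}$ sets $S$. This is the linear version of \eqref{eq:budget}.
\end{itemize}
The two arguments produce the same $n_i$, since $\sum_{|A|=i}x_A = \binom{m}{i}y_i$. Moreover, the symmetrised single constraint is just the average of all the constraints, so the routes are equivalent. Yours needs no group action and no convexity step. The paper's makes visible the $S_m$-invariance that its surrounding discussion emphasises.

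Your closing remark therefore puts the symmetrisation in the wrong direction. Averaging over $S_m$ is how the paper proves that $\mathcal{P}$ is at most the value of \eqref{eq:countlp}, and your double count replaces it.

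You were right to flag the index range, and the convention $0 \le i \le m$ is in fact forced. The alternative you sketch cannot be completed in general: you proposed taking an optimal vertex of \eqref{eq:countlp} and checking that its support lies in $i \le m$. Here is a counterexample. Take $s = t = 3$, $m = 6$, $n = 1$. The point $n_7 = 1$ is feasible, since $\binom{7}{3} = 35 \le 2\binom{6}{3} = 40$, and it has value $7$. On the other hand, $\mathcal{P}(6,1;3,3) \le 6\sum_A x_A \le 6$.

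So the proposition holds only with $i$ restricted to $0,\dots,m$. The paper assumes this tacitly; its substitution $n_i = \binom{m}{i}y_i$ only makes sense in that range. State that convention explicitly and drop the vertex-reduction idea, and your argument is complete.
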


\begin{proof}
Suppose $(n_i)$ is feasible for \eqref{eq:countlp} and set $x_A = n_{|A|}/\binom{m}{|A|}$. Then $\sum_A x_A = \sum_i n_i \le n$, and for any $s$-set $S$,
\[
  \sum_{A \supseteq S} x_A \;=\; \sum_i \binom{m-s}{i-s}\frac{n_i}{\binom{m}{i}} \;=\; \frac{1}{\binom{m}{s}}\sum_i \binom{i}{s} n_i \;\le\; \lambda ,
\]
where the middle equality is the identity
\begin{equation}\label{eq:swap}
  \binom{m}{i}\binom{i}{s} \;=\; \binom{m}{s}\binom{m-s}{i-s} ,
\end{equation}
both sides counting the pairs $(S,A)$ with $S \subseteq A \subseteq [m]$, $|S| = s$, $|A| = i$. The objective is $\sum_A |A| x_A = \sum_i i\,n_i$, so $\mathcal{P} \ge$ the value of \eqref{eq:countlp}.

Conversely let $x$ be feasible for \eqref{eq:subsetlp}. The feasible region and the objective are invariant under the natural action of $S_m$ on subsets, so the average $\bar{x}_A = \frac{1}{m!}\sum_{\pi \in S_m} x_{\pi(A)}$ is feasible with the same objective value, and $\bar{x}_A$ depends only on $|A|$; write $\bar{x}_A = y_{|A|}$. Put $n_i = \binom{m}{i} y_i$. Then $\sum_i n_i = \sum_A \bar{x}_A \le n$, and applying the $s$-set constraint to any single $S$ and using \eqref{eq:swap} again,
\[
  \lambda \;\ge\; \sum_{A \supseteq S} \bar{x}_A \;=\; \sum_i \binom{m-s}{i-s} y_i \;=\; \frac{1}{\binom{m}{s}}\sum_i \binom{i}{s}n_i ,
\]
so $(n_i)$ is feasible for \eqref{eq:countlp} with objective $\sum_i i\,n_i = \sum_A |A|\bar{x}_A$. Hence $\mathcal{P} \le$ the value of \eqref{eq:countlp}.
\end{proof}

\begin{corollary}\label{cor:lpcoincide}
Let $1 \le c \le \tfrac{s}{s+2}T$ and $n = T-c$. Then
\[
  \begin{gathered}
    \mathcal{P}(m,n;s,t) \;=\; (s+1)n + \frac{2c}{s} , \\
    \text{so}\quad
    \bigl\lfloor \mathcal{P}(m,n;s,t) \bigr\rfloor \;=\; \Rom(m,n) .
  \end{gathered}
\]
Below the design threshold, Roman's bound, the counting bound and the full subset relaxation therefore all coincide.
\end{corollary}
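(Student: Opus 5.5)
By Proposition~\ref{prop:subsetlp} it suffices to compute the optimum of the two-constraint counting program \eqref{eq:countlp} at $n = T-c$. I will prove that this optimum equals $(s+1)n + 2c/s$ by exhibiting a feasible point with that value and a matching dual certificate. The floor identity then follows from Theorem~\ref{thm:A}, because $(s+1)n \in \Z$ gives $\lfloor (s+1)n + 2c/s\rfloor = (s+1)n + \lfloor 2c/s\rfloor = \Rom(m,n)$.

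\emph{Primal.} Use only sizes $s+1$ and $s+2$. Set $n_{s+2} = 2c/s$ and $n_{s+1} = n - 2c/s$. The second value is nonnegative because $c \le \tfrac{s}{s+2}T$ gives $2c/s \le 2T/(s+2)$, hence $n - 2c/s \ge T - c - 2c/s = T - c(s+2)/s \ge 0$. The count constraint holds with equality. For the budget, note $\binom{s+2}{s} = \binom{s+1}{2}$, so the cost is
\[
(s+1)\bigl(n - \tfrac{2c}{s}\bigr) + \binom{s+1}{2}\tfrac{2c}{s} = (s+1)n - \tfrac{2(s+1)c}{s} + (s+1)c .
\]
I need this to be at most $(s+1)T = (s+1)n + (s+1)c$. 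It is, since the difference is $-2(s+1)c/s \le 0$. That is slack, so the point can be improved, and I should instead solve for the mix that makes the budget tight. Write $n_{s+2} = y$ and $n_{s+1} = n - y$. Budget tightness reads $(s+1)(n-y) + \binom{s+1}{2}y = (s+1)T$, i.e.\ $y(\binom{s+1}{2} - (s+1)) = (s+1)c$, so $y = 2c/(s-2)$ after simplifying with $\binom{s+1}{2} - (s+1) = (s+1)(s-2)/2$. This does not match $2c/s$, so I must recheck the normalisation against Lemma~\ref{lem:basic}. There the inequality is $\binom{s+1}{2}a \le (s+1)c$ with $a = \sum_j e_j$, and the cost increment per unit of $a$ is $\binom{s+1}{2}$ relative to an $(s+1)$-column, not relative to zero. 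The correct tight mix is therefore $y = 2c/s$ only when the budget bookkeeping is $\sum_j \binom{d_j}{s} - (s+1)n$, and $g(1) = \binom{s+2}{s} - (s+1) = \binom{s+1}{2}$ says $\binom{s+2}{s} = \binom{s+1}{2} + (s+1)$. So the cost of a column of size $s+2$ is $\binom{s+2}{2}$, not $\binom{s+1}{2}$, and I made the slip above. With $\binom{s+2}{s} = \binom{s+2}{2}$, the extra cost per promoted column is $\binom{s+1}{2}$. The budget then becomes $(s+1)n + \binom{s+1}{2}y \le (s+1)n + (s+1)c$, which is tight at $y = 2c/s$. So the primal value is $(s+1)n + y = (s+1)n + 2c/s$, and feasibility uses exactly the range condition.

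\emph{Dual upper bound.} Multiply the count constraint by $\alpha = s+1 - \binom{s+1}{2}\beta$ with $\beta = 2/s$, which gives $\alpha = 0$, and the budget constraint by $\beta$. Checking $\alpha \ge 0$ together with $i \le \alpha + \beta\binom{i}{s}$ for all $i$ is cleaner via the two-slope form. By Lemma~\ref{lem:convex}, for every $i$ we have
\[
\binom{i}{s} \ge (s+1) + \binom{s+1}{2}e^+ - s e^-, \qquad e = i - s - 1 .
\]
Using $\binom{s}{2} \ge 0$ this gives $i - (s+1) \le \tfrac{2}{s}\bigl(\binom{i}{s} - (s+1)\bigr)$: for $e \ge 0$ it is immediate from the slope $\binom{s+1}{2}\cdot\tfrac{2}{s} = s+1 \ge 1$, and for $e < 0$ it follows from $-e \le \tfrac{2}{s}\cdot(-s e) \cdot \tfrac12$-type checks, specifically $e \le \tfrac{2}{s}(-s)(-e)\cdot$ sign, which reduces to $e \le -2e^-$. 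Summing $n_i$ times this inequality, and bounding $\sum_i n_i \le n$ on the terms with coefficient $(s+1)(1 - 2/s) \ge 0$, yields $\sum_i i\,n_i \le (s+1)n + \tfrac{2}{s}\bigl(\lambda\binom{m}{s} - (s+1)n\bigr) = (s+1)n + 2c/s$. This is exactly Lemma~\ref{lem:basic} with real weights.

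The main point to watch is that Lemma~\ref{lem:basic} was stated for integer column counts. The dual step must be redone for real $n_i \ge 0$, and that works because the argument is linear in the $n_i$. The primal step is where the range $c \le \tfrac{s}{s+2}T$ enters.
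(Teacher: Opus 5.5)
Your route is the paper's. Proposition~\ref{prop:subsetlp} reduces $\mathcal{P}$ to \eqref{eq:countlp}, the value is pinned by the point $x^\star$ of Lemma~\ref{lem:lpvertex} together with a dual certificate, and Theorem~\ref{thm:A} turns the value into $\Rom(m,n)$. Your primal step is correct once you use $\binom{s+2}{s}=\binom{s+2}{2}$. The dual step, however, repeats the normalisation slip you caught in the primal, and as written it fails.

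With budget multiplier $\beta = 2/s$, your pointwise inequality $i-(s+1) \le \frac{2}{s}\bigl(\binom{i}{s}-(s+1)\bigr)$ is false for $i \in \{s-1,s\}$. At $i=s$ it reads $-1 \le -2$, and your reduction to ``$e \le -2e^-$'' is exactly this false statement. Even granting the inequality, summing gives $\sum_i i\,n_i \le (s+1)(1-\frac{2}{s})n + \frac{2}{s}(s+1)T = (s+1)n + \frac{2(s+1)c}{s}$, not $(s+1)n + \frac{2c}{s}$. Your last equality drops a factor $s+1$, since $\lambda\binom{m}{s}-(s+1)n = (s+1)c$. Your claim that $\alpha = 0$ is also inconsistent with the coefficient $(s+1)(1-\frac{2}{s})$ you then put on $\sum_i n_i$. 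So the upper bound you derive does not meet the primal value.

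The fix is the certificate the paper invokes, namely the multipliers realising $\Rom_{s+1}$. Put $\beta = 1/\binom{s+1}{2} = \frac{2}{s(s+1)}$ on the budget constraint and $\alpha = s+1-\frac{2}{s}$ on the count constraint. The pointwise inequality you need is then $\binom{s+1}{2}(i-s-1) \le \binom{i}{s}-(s+1)$, i.e.\ $g(e) \ge \binom{s+1}{2}e$. For $e \ge 0$ this is Lemma~\ref{lem:convex}. For $e<0$ it follows from $g(e) \ge -s e^- = se \ge \binom{s+1}{2}e$. Multiplying by $n_i \ge 0$ and summing,
\[
  \begin{aligned}
    \sum_i i\,n_i &\le \Bigl(s+1-\tfrac{2}{s}\Bigr)\sum_i n_i + \tfrac{2}{s(s+1)}\sum_i \tbinom{i}{s} n_i \\
    &\le \Bigl(s+1-\tfrac{2}{s}\Bigr)n + \tfrac{2T}{s} \;=\; (s+1)n + \tfrac{2c}{s} .
  \end{aligned}
\]
Here $s+1-\frac{2}{s}>0$ is what lets you replace $\sum_i n_i$ by $n$. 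This matches your primal value, and the rest of your argument stands. Like the paper, you also tacitly need $m \ge s+2$ so that $n_{s+2}$ is a variable at all; for $m = s+1$ one has $\mathcal{P}(m,n;s,t) = (s+1)n$.
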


\begin{proof}
By Proposition~\ref{prop:subsetlp} and Lemma~\ref{lem:lpvertex} below, $\mathcal{P} = (s+1)n + 2c/s$; by Theorem~\ref{thm:A} its floor is $\Rom(m,n)$.
\end{proof}

This is the sharpest possible statement of what the present obstruction has to be. Every inequality used in \eqref{eq:subsetlp} is valid for the integer program, and the linear programming value is exactly the bound we are trying to beat; so any improvement in this range is, of necessity, an integrality obstruction. Theorem~\ref{thm:B} supplies one.

\subsection{The optimal vertex}

It will be convenient to name the optimum of \eqref{eq:countlp}. Note that it is fractional.

\begin{lemma}\label{lem:lpvertex}
Let $1 \le c \le \tfrac{s}{s+2}T$ and $n = T-c$. Then \eqref{eq:countlp} has optimal value $(s+1)n + 2c/s$, attained at the point $x^\star$ given by
\[
  n_{s+1} = n - \frac{2c}{s}, \qquad n_{s+2} = \frac{2c}{s}, \qquad n_i = 0 \ \text{ otherwise},
\]
and for $1 \le c < \tfrac{s}{s+2}T$ this is the unique optimum.
\end{lemma}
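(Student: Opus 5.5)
The plan is a weak-duality argument for the upper bound, a direct feasibility check of $x^\star$ for the lower bound, and a complementary-slackness argument for uniqueness. Attach a multiplier $\alpha \ge 0$ to the column-count constraint $\sum_i n_i \le n$ and $\beta \ge 0$ to the budget constraint $\sum_i \binom{i}{s} n_i \le \lambda\binom{m}{s} = (s+1)T$. The dual of \eqref{eq:countlp} is then to minimise $\alpha n + \beta(s+1)T$ subject to $\alpha + \beta\binom{i}{s} \ge i$ for every $i$. I would choose $(\alpha,\beta)$ so that the constraint is tight at both $i = s+1$ and $i = s+2$, that is, the line $\alpha + \beta v$ through the points $\bigl(\binom{s+1}{s},\,s+1\bigr)$ and $\bigl(\binom{s+2}{s},\,s+2\bigr)$. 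Since $\binom{s+2}{s} - \binom{s+1}{s} = \binom{s+1}{2}$, this gives
\[
  \beta = \frac{2}{s(s+1)}, \qquad \alpha = (s+1) - \frac{2}{s} .
\]

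Dual feasibility, $i \le \alpha + \beta\binom{i}{s}$ for all $i$, is exactly the statement $p(i) \ge 0$ of Lemma~\ref{lem:pen} after multiplying through by $\binom{s+1}{2}$. Indeed, $\binom{s+1}{2}(\alpha + \beta\binom{i}{s} - i)$ equals $p(i)$ once the affine parts are matched. Lemma~\ref{lem:pen} also gives equality only at $i \in \{s+1, s+2\}$. The dual objective is then
\[
  \alpha(T-c) + \frac{2}{s}\,T = (s+1)(T-c) + \frac{2c}{s},
\]
which is the upper bound on the value of \eqref{eq:countlp}.

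For the lower bound, I check that $x^\star$ is primal feasible. It needs $n_{s+1} = n - 2c/s \ge 0$, which is equivalent to $2c/s \le T - c$, i.e.\ $c \le \tfrac{s}{s+2}T$; this is exactly the range hypothesis. The column count is $\sum_i n_i = n$. The budget term is
\[
  (s+1)\Bigl(n - \tfrac{2c}{s}\Bigr) + \binom{s+2}{2}\tfrac{2c}{s} = (s+1)n + \tfrac{2c}{s}\Bigl(\binom{s+2}{2} - (s+1)\Bigr) = (s+1)n + (s+1)c = (s+1)T,
\]
so the budget is met with equality. The objective is $(s+1)n + 2c/s$, which matches the dual bound, so $x^\star$ is optimal.

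For uniqueness I use complementary slackness against this dual solution. Any optimum $y$ gives zero duality gap, and the gap decomposes as
\[
  \sum_i y_i\bigl(\alpha + \beta\tbinom{i}{s} - i\bigr) + \alpha\Bigl(n - \sum_i y_i\Bigr) + \beta\Bigl((s+1)T - \sum_i \tbinom{i}{s} y_i\Bigr).
\]
Each term is non-negative. Because $\alpha > 0$ and $\beta > 0$ (for $s \ge 2$), both constraints must be tight. Because $p(i) > 0$ off $\{s+1, s+2\}$, $y$ is supported on $\{s+1, s+2\}$. The two tight equations in the two unknowns $y_{s+1}, y_{s+2}$ have non-singular coefficient matrix, since its determinant is $\binom{s+2}{s} - \binom{s+1}{s} \ne 0$. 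So $y = x^\star$.

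The expected obstacle is where the strict inequality $c < \tfrac{s}{s+2}T$ enters. If the dual solution is unique, the argument above already yields uniqueness even at the endpoint. But at the endpoint $c = \tfrac{s}{s+2}T$ we have $n_{s+1} = 0$, and the point with $n_{s+2} = n$ alone is a degenerate vertex. I would therefore verify that the strict range is only needed to ensure $n_{s+1} > 0$ (nondegeneracy), and state uniqueness on the open range as claimed; I would not try to settle what happens at the endpoint.
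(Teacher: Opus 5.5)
Your proof is correct. The certificate you construct is the one the paper names: $\alpha=(s+1)-\tfrac{2}{s}=\tfrac{(s+2)(s-1)}{s}=B(s+1)$ and $\beta\lambda\binom{m}{s}=\tfrac{2T}{s}$, so your dual objective is exactly $\Rom_{s+1}(m,n)$.

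The difference is in how optimality and uniqueness are justified. The paper defers both to the proof of Theorem~\ref{thm:A}, that is, to the comparison $\Rom_k\ge\Rom_{s+1}$ over all $k$ via the ratios $R(i)$, together with a simplex-style ratio-test remark. You instead verify dual feasibility in one line from Lemma~\ref{lem:pen}, through the identity $\binom{s+1}{2}\bigl(\alpha+\beta\binom{i}{s}-i\bigr)=p(i)$, and get uniqueness from complementary slackness.

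Your route is self-contained and needs none of the $R(i)$ estimates. It even runs the logic the other way. Each $\Rom_k$ is the dual objective at the multipliers that make the dual constraints tight at $i=k$ and $i=k+1$, and these are dual feasible by convexity of $v\mapsto\binom{v}{s}$. So weak duality plus feasibility of $x^\star$ re-proves $\Rom_k\ge\Rom_{s+1}$ for every $k\ge s-1$ whenever $c\le\tfrac{s}{s+2}T$.

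Your last paragraph is needlessly cautious, though. The complementary-slackness step uses neither $c<\tfrac{s}{s+2}T$ nor uniqueness of the dual, so you have in fact proved primal uniqueness on the closed range. At the endpoint it is the dual that stops being unique: the $\Rom_{s+2}$ multipliers tie with yours there because $R(1)=\tfrac{s}{s+2}$. That tie is the counterpart of the primal degeneracy $n_{s+1}=0$, and it is what the paper's strict inequality tracks. Primal uniqueness only needs one dual optimum with positive multipliers and reduced costs strictly positive off $\{s+1,s+2\}$, and yours is such an optimum.
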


\begin{proof}
Both constraints of \eqref{eq:countlp} are tight at $x^\star$: the first by construction, and for the second,
$\binom{s+1}{s}n_{s+1} + \binom{s+2}{s}n_{s+2} = (s+1)\bigl(n - \tfrac{2c}{s}\bigr) + \tfrac{(s+1)(s+2)}{2}\cdot\tfrac{2c}{s} = (s+1)(n+c) = (s+1)T = \lambda\binom{m}{s}$.
Its objective value is $(s+1)n_{s+1} + (s+2)n_{s+2} = (s+1)n + 2c/s$. Feasibility needs $n \ge 2c/s$, which is exactly $c \le \tfrac{s}{s+2}T$. Optimality and uniqueness are the content of the proof of Theorem~\ref{thm:A}: the dual optimum is the pair of multipliers realising $\Rom_{s+1}$, and by Lemma~\ref{lem:AB} the ratio test selects the basis $\{s+1,s+2\}$ strictly for $c$ in the open range.
\end{proof}

Note that the profile $n_{s+2} = \lfloor 2c/s \rfloor$, $n_{s+1} = n - \lfloor 2c/s\rfloor$ --- the integral profile that Lemma~\ref{lem:rigid} forces on an extremal configuration --- is \emph{not} optimal for \eqref{eq:countlp} unless $s \mid 2c$. This distinction matters in the next subsection: to show that a relaxation returns Roman's bound one must argue about its optimal value, and exhibiting one feasible integral profile is not enough.

\subsection{The Davies--Gill--Horsley program}

Davies, Gill and Horsley \cite{DaviesGillHorsley2026} improve on \eqref{eq:roman} by adding to \eqref{eq:countlp} a family of inequalities that is not implied by the covering constraints, and which therefore escapes Corollary~\ref{cor:lpcoincide}. Write $E(m,n;s,t)$ for the value of the program with variables $n_i$ $(s-1 \le i \le m)$, objective $\sum_i i\,n_i$, the constraints $\sum_i n_i = n$ and $\sum_i \binom{i}{s}n_i \le \lambda\binom{m}{s}$, and, for all integers $v,k$ with $1 \le v < s \le k \le m$, the inequality of \cite[Thm.~1.2]{DaviesGillHorsley2026}:
\begin{equation}\label{eq:dgh}
\begin{aligned}
  \sum_{i=s-1}^{m} &\Bigl( \min\Bigl\{ \tbinom{i-v}{s-v},\, K \Bigr\}
     - \alpha \Bigr)\binom{i}{v} n_i \\
  &\;\le\; \frac{K-\alpha}{K}\,\binom{m}{v}
     \left( \lambda\binom{m-v}{s-v} - \alpha \right) ,
\end{aligned}
\end{equation}
where $K = \binom{k-v}{s-v}$ and $\alpha$ is the residue of $\lambda\binom{m-v}{s-v}$ modulo $K$, so $0 \le \alpha < K$.

The factor $(K-\alpha)/K$ is essential and is easy to lose. We recall where it comes from. Let $\gamma$ be defined by $\gamma K = \lambda\binom{m-v}{s-v} - \alpha$, and for a $v$-set $X$ let $\tau(X) = \sum (K - \binom{|C|-v}{s-v})$, the sum being over the columns $C \supseteq X$ with $|C| < k$. Davies, Gill and Horsley prove \cite[eq.~(5)]{DaviesGillHorsley2026} that every $v$-set $X$ satisfies
\[
  \deg(X) \;\le\; \gamma + \frac{\tau(X)}{K - \alpha} .
\]
Summing over all $\binom{m}{v}$ sets $X$, and using $\sum_X \deg(X) = \sum_i \binom{i}{v}n_i$ together with $\sum_X \tau(X) = \sum_{i<k}\bigl(K - \binom{i-v}{s-v}\bigr)\binom{i}{v}n_i$, then multiplying by $K - \alpha > 0$ and separating the ranges $i < k$ and $i \ge k$, gives exactly \eqref{eq:dgh}. The two sides agree with the $\alpha$-free form if and only if $\alpha = 0$; when $\alpha \ge 1$ the inequality \eqref{eq:dgh} is strictly stronger, and it is \eqref{eq:dgh} that must be verified.

The following lemma explains, for every $s$, why the program so often returns the Roman value.

\begin{lemma}\label{lem:tight}
Let $1 \le c \le \tfrac{s}{s+2}T$ and let $x^\star$ be as in Lemma~\ref{lem:lpvertex}. If $\alpha = 0$ for the pair $(v,k) = (1,s+2)$, then $x^\star$ satisfies \eqref{eq:dgh} for that pair \emph{with equality}.
\end{lemma}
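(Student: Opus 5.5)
The plan is to substitute $x^\star$ into both sides of \eqref{eq:dgh} for $(v,k)=(1,s+2)$ with $\alpha=0$, and to reduce the left-hand side to a multiple of the budget constraint. That constraint is already known to be tight at $x^\star$ from the proof of Lemma~\ref{lem:lpvertex}.

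First, fix the parameters. With $v=1$ and $k=s+2$ we get $K=\binom{s+1}{s-1}=\binom{s+1}{2}$. The factor $(K-\alpha)/K$ equals $1$, and the $-\alpha$ terms vanish on both sides. Since $x^\star$ is supported on $i\in\{s+1,s+2\}$, only two coefficients matter.
\begin{itemize}
\item At $i=s+1$: $\min\{\binom{s}{s-1},K\}=s$, because $s<\binom{s+1}{2}$ for $s\ge2$.
\item At $i=s+2$: $\min\{\binom{s+1}{s-1},K\}=K$.
\end{itemize}
Hence the left-hand side is $s(s+1)\,n_{s+1}+K(s+2)\,n_{s+2}$.

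Next, rewrite this using $K(s+2)=\tfrac{s(s+1)(s+2)}{2}=s\binom{s+2}{2}$. The left-hand side becomes
\[
s\Bigl(\tbinom{s+1}{s}n_{s+1}+\tbinom{s+2}{s}n_{s+2}\Bigr).
\]
By the tightness computation in the proof of Lemma~\ref{lem:lpvertex}, the bracket equals $(s+1)T=\lambda\binom{m}{s}$. So the left-hand side is $s\lambda\binom{m}{s}$.

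The right-hand side with $v=1$ and $\alpha=0$ is $m\lambda\binom{m-1}{s-1}$. The identity $m\binom{m-1}{s-1}=s\binom{m}{s}$ (the case $s=1$ of \eqref{eq:swap}, or simply counting pointed $s$-sets) shows it also equals $s\lambda\binom{m}{s}$. This gives equality.

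There is no real obstacle; the only point needing care is the value of the $\min$ at $i=s+1$, namely the inequality $s<\binom{s+1}{2}$. I would also note that the hypothesis on $c$ is used only to guarantee that $x^\star$ is defined with $n_{s+1}\ge0$, as in Lemma~\ref{lem:lpvertex}.
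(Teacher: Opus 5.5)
Your proof is correct and follows essentially the paper's route: the same evaluation of the two minima ($s$ at $i=s+1$, $K$ at $i=s+2$) and the same identity $m\binom{m-1}{s-1}=s\binom{m}{s}$ on the right-hand side. The only difference is presentational. You recognise the left-hand side as $s$ times the tight budget constraint from Lemma~\ref{lem:lpvertex}, whereas the paper substitutes $n_{s+1}=n-2c/s$ and $n_{s+2}=2c/s$ explicitly; your version makes it slightly more transparent why equality holds.
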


\begin{proof}
Here $K = \binom{s+1}{s-1} = \binom{s+1}{2}$, and $\binom{i-1}{s-1}$ equals $s$ at $i = s+1$ and $K$ at $i = s+2$, so both minima are attained by the first argument. With $\alpha = 0$ the left side of \eqref{eq:dgh} is
\[
  \begin{aligned}
    &s(s+1)n_{s+1} + K(s+2)n_{s+2} \\
    &\quad =\; s(s+1)\Bigl(n - \frac{2c}{s}\Bigr) \\
    &\qquad\quad +\; \frac{s(s+1)}{2}(s+2)\frac{2c}{s} \\
    &\quad =\; s(s+1)(n + c) \;=\; s(s+1)T ,
  \end{aligned}
\]
using $(s+2) - 2 = s$ to collect the $c$ terms. The right side is $m\lambda\binom{m-1}{s-1} = \lambda s \binom{m}{s} = s(s+1)T$, by $m\binom{m-1}{s-1} = s\binom{m}{s}$ and \eqref{eq:T}.
\end{proof}

So $x^\star$ sits exactly on the face cut out by \eqref{eq:dgh} at $(v,k) = (1,s+2)$ whenever $\alpha$ vanishes there, and the program cannot separate it. When $\alpha \ne 0$ the constraint moves, and whether it cuts $x^\star$ off becomes a genuine question. For $s = t = 3$ we can answer it completely.

\begin{proposition}\label{prop:lp}
Let $s = t = 3$, let $m \ge 5$ be admissible, let $1 \le c \le 3T/5$, put $n = T-c$, and let $x^\star$ be the point $n_4 = n - \tfrac{2c}{3}$, $n_5 = \tfrac{2c}{3}$, $n_i = 0$ otherwise. Then $x^\star$ is feasible for $E(m,n;3,3)$ if and only if
\begin{enumerate}[label=\textup{(\roman*)},leftmargin=2.2em]
  \item $m \equiv 2 \pmod 3$: no condition;
  \item $m \equiv 1 \pmod 3$: \ $\displaystyle c \le \frac{3T}{5}\cdot\frac{m-4}{m-2}$;
  \item $m \equiv 0 \pmod 3$: \ $\displaystyle \frac{m}{5} \le c$, \quad $5c + m \le 3T$, \quad and \quad $\displaystyle c \le \frac{3T}{5}\cdot\frac{m-3}{m-2}$.
\end{enumerate}
When these hold, $E(m,n;3,3) = 4n + \tfrac{2c}{3}$ and hence $\lfloor E(m,n;3,3)\rfloor = \Rom(m,n)$: the program of \cite{DaviesGillHorsley2026} returns Roman's bound and cannot establish Theorem~\ref{thm:B}.
\end{proposition}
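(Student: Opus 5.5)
The plan is a direct check of every inequality \eqref{eq:dgh} at $x^\star$, followed by a sandwich argument for the value of $E$. The sandwich comes first. The program $E(m,n;3,3)$ is \eqref{eq:countlp} with $\sum_i n_i \le n$ replaced by equality and with the constraints \eqref{eq:dgh} added. Its value is therefore at most that of \eqref{eq:countlp}, which by Lemma~\ref{lem:lpvertex} is $4n + \tfrac{2c}{3}$, attained at $x^\star$. Moreover $x^\star$ satisfies $\sum_i n_i = n$ with equality. Hence whenever $x^\star$ is feasible for $E$ we get $E = 4n + \tfrac{2c}{3}$, and Theorem~\ref{thm:A} gives $\lfloor E \rfloor = \Rom(m,n)$. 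So the whole content is the feasibility characterisation.

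\emph{Reduction.} With $s = 3$ and $\lambda = 2$, the index $v$ ranges over $\{1,2\}$ and $k$ over $3 \le k \le m$. At $x^\star$ only $i = 4,5$ carry mass, so each inequality \eqref{eq:dgh} becomes one affine inequality in $c$ after substituting $n = T - c$ and $m\binom{m-1}{2} = 3\binom{m}{3}$.
\begin{itemize}
\item For $v = 2$: $K = k-2$, the coefficients $\binom{i-2}{1}$ are $2$ and $3$, and $\alpha = 2(m-2) \bmod (k-2)$.
\item For $v = 1$: $K = \binom{k-1}{2}$, the coefficients $\binom{i-1}{2}$ are $3$ and $6$, and $\alpha = (m-1)(m-2) \bmod K$.
\end{itemize}
In each case the truncation $\min\{\cdot, K\}$ matters only when $K < 3$ or $K < 6$ respectively, that is, only for $k \le 4$ (for $v = 2$) or $k \le 4$ (for $v = 1$).

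\emph{Pairs with $\alpha = 0$.} Here I would use the computation in Lemma~\ref{lem:tight}, generalised. The untruncated left side at $x^\star$ equals the right side exactly; this is the $v$-analogue of the identity $\sum_X \deg(X) = \sum_i \binom{i}{v} n_i$ combined with the tightness of the budget at $x^\star$. Truncation can only lower the left side, so all such pairs are satisfied.

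\emph{Pairs with $\alpha \ne 0$.} First I would write the difference $\mathrm{RHS} - \mathrm{LHS}$ at $x^\star$ as $\alpha$ times an explicit affine function of $c$ and $m$, plus a truncation correction. This should be possible because the $\alpha$-free parts cancel exactly as in the previous step. For large $K$ (all $k \ge 6$, say) I expect this function to be bounded below by a positive quantity over the whole range $1 \le c \le 3T/5$, uniformly in $k$, giving no condition.

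The binding pairs should be the small ones. The first is $(v,k) = (2,5)$, with $K = 3$ and $\alpha = 2(m-2) \bmod 3$, which is $0$, $1$, $2$ for $m \equiv 2, 1, 0 \pmod 3$. This matches the trichotomy and should yield the upper bounds $c \le \tfrac{3T}{5}\cdot\tfrac{m-4}{m-2}$ and $c \le \tfrac{3T}{5}\cdot\tfrac{m-3}{m-2}$. The second is $(v,k) = (1,5)$, with $K = 6$ and $\alpha = 2$ exactly when $3 \mid m$. Together with the truncated pairs $k = 3, 4$, this should produce the lower bound $m/5 \le c$ and the condition $5c + m \le 3T$.

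The main obstacle is this last step: making the large-$k$ estimate uniform, and identifying exactly which of the finitely many small pairs gives which of the three conditions in (iii), including showing that the others are implied. Failing a clean uniform bound, I would first try to prove monotonicity in $k$ of the slack for fixed $v$, so that only $k \le 6$ need explicit checking.
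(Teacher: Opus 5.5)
Your outline follows the paper's proof. It uses the same sandwich for the value of $E$: $E$ is a restriction of \eqref{eq:countlp}, whose optimum is attained at $x^\star$ by Lemma~\ref{lem:lpvertex}. It makes the same reduction to one affine inequality in $c$ per pair $(v,k)$, and it identifies the same binding pairs. Everything you assert is correct. Your uniform treatment of $\alpha=0$, via $\binom{i-v}{s-v}\binom{i}{v}=\binom{s}{v}\binom{i}{s}$ and tightness of the budget at $x^\star$, is a tidy generalisation of Lemma~\ref{lem:tight}. It disposes of $(1,3)$, $(2,3)$, $(2,4)$ always, of $(1,4)$ and $(1,5)$ when $3\nmid m$, and of $(2,5)$ when $m\equiv2\pmod3$.

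The gap is that the feasibility characterisation, which you rightly call the whole content, is left at ``should yield'' and ``I expect''. Three computations are needed.

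(a) For $k\ge5$ nothing is truncated. The left side of \eqref{eq:dgh} at $x^\star$ is $12T-\alpha\Sigma_v$, with $\Sigma_1=4T-\tfrac{10c}{3}$ and $\Sigma_2=6T-\tfrac{10c}{3}$. The right side is $\tfrac{K-\alpha}{K}\bigl(12T-\alpha\binom{m}{v}\bigr)$. So the inequality is equivalent to $\alpha\bigl[\Sigma_v-\binom{m}{v}-\bigl(12T-\alpha\binom{m}{v}\bigr)/K\bigr]\ge0$. At $(1,5)$ with $\alpha=2$ this is $5c+m\le3T$, and at $(2,5)$ with $\alpha=1,2$ it gives the two upper bounds.

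(b) At the truncated pair $(1,4)$ both truncated coefficients equal $3$. After cancelling $3-\alpha>0$ the inequality reads $m\alpha\le10c$, which is the source of $c\ge m/5$.

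(c) The step you flag as the main obstacle needs no monotonicity in $k$, which would be awkward anyway since $\alpha$ jumps around with $K$. For $k\ge6$, bound $\bigl(12T-\alpha\binom{m}{v}\bigr)/K\le12T/K$, using $K\ge10$ when $v=1$ and $K\ge4$ when $v=2$. Then use $c\le3T/5$ to get $\Sigma_1\ge2T$ and $\Sigma_2\ge4T$. The bracket is nonnegative once $4T/5\ge m$, i.e.\ $m\ge6$, respectively once $T\ge\binom{m}{2}$, i.e.\ $m\ge8$. So your hoped-for uniform estimate is not uniform in $m$. The admissible $m<8$, namely $m=5$ and $m=6$, must be checked by hand; there every pair with $k\ge6$ either does not occur or has $\alpha=0$.

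With (a)--(c) written out, your plan is a complete proof.
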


\begin{proof}
Here $\lambda = 2$, $12T = m(m-1)(m-2)$, $\binom{m}{3} = 2T$, and $v \in \{1,2\}$. Write
\[
  \begin{aligned}
    \Sigma_1 &= \sum_i \binom{i}{1}n_i
      = 4n + \tfrac{2c}{3} = 4T - \tfrac{10c}{3}, \\
    \Sigma_2 &= \sum_i \binom{i}{2}n_i
      = 6n + \tfrac{8c}{3} = 6T - \tfrac{10c}{3} .
  \end{aligned}
\]
Also $\binom{m}{v}\lambda\binom{m-v}{3-v} = \lambda\binom{m}{3}\binom{3}{v} = 12T$ for both $v=1$ and $v=2$.

\emph{Case $v = 1$.} Then $\binom{i-1}{2}$ is $3$ at $i=4$ and $6$ at $i=5$, and $K = \binom{k-1}{2}$.

If $k = 3$ then $K = 1$, $\alpha = 0$, both minima are $1$, and the left side of \eqref{eq:dgh} is $\Sigma_1 \le 4T \le 12T$.

If $k = 4$ then $K = 3$, both minima are $3$, the left side is $(3-\alpha)\Sigma_1$ and the right side is $\tfrac{3-\alpha}{3}(12T - m\alpha)$. As $\alpha < 3$ we may cancel $3-\alpha > 0$, and \eqref{eq:dgh} becomes $\Sigma_1 \le 4T - m\alpha/3$, that is, $m\alpha \le 10c$. Here $\alpha$ is the residue of $(m-1)(m-2)$ modulo $3$, which is $0$ if $3 \nmid m$ and $2$ if $3 \mid m$. So the constraint is vacuous unless $3 \mid m$, when it reads $c \ge m/5$.

If $k \ge 5$ then $K \ge 6$, both minima are attained by $\binom{i-1}{2}$, and the left side is $12n_4 + 30n_5 - \alpha\Sigma_1 = 12T - \alpha\Sigma_1$. Comparing with the right side $\tfrac{K-\alpha}{K}(12T - m\alpha)$ and rearranging, \eqref{eq:dgh} is equivalent to
\begin{equation}\label{eq:lpcrit}
  \alpha\Bigl[\, \Sigma_1 - m - \frac{12T - m\alpha}{K} \,\Bigr] \;\ge\; 0 ,
\end{equation}
so it holds automatically when $\alpha = 0$. For $k = 5$ we have $K = 6$ and $\alpha$ is the residue of $(m-1)(m-2)$ modulo $6$; this is even always, and is $0$ modulo $3$ exactly when $3 \nmid m$, so $\alpha = 0$ for $3 \nmid m$ and $\alpha = 2$ for $3 \mid m$. In the latter case \eqref{eq:lpcrit} reads $4T - \tfrac{10c}{3} \ge m + \tfrac{12T-2m}{6}$, that is, $5c + m \le 3T$. For $k \ge 6$ we have $K \ge 10$, so $\tfrac{12T - m\alpha}{K} \le \tfrac{6T}{5}$, while $\Sigma_1 \ge 4T - \tfrac{10}{3}\cdot\tfrac{3T}{5} = 2T$; thus \eqref{eq:lpcrit} holds as soon as $\tfrac{4T}{5} \ge m$, that is, $(m-1)(m-2) \ge 15$, which is the case for $m \ge 6$.

\emph{Case $v = 2$.} Then $\binom{i-2}{1}$ is $2$ at $i=4$ and $3$ at $i=5$, and $K = k-2$.

If $k = 3$ then $K = 1$, $\alpha = 0$, and the left side is $\Sigma_2 \le 6T \le 12T$. If $k = 4$ then $K = 2$ and $\alpha \equiv 2(m-2) \equiv 0 \pmod 2$, so $\alpha = 0$, both minima are $2$, and \eqref{eq:dgh} reads $2\Sigma_2 \le 12T$, i.e.\ $\tfrac{20c}{3} \ge 0$.

If $k \ge 5$ then $K \ge 3$, both minima are attained by $\binom{i-2}{1}$, and the left side is $12n_4 + 30n_5 - \alpha\Sigma_2 = 12T - \alpha\Sigma_2$. As before \eqref{eq:dgh} is equivalent to
\[
  \alpha\Bigl[\, \Sigma_2 - \tbinom{m}{2} - \frac{12T - \alpha\binom{m}{2}}{K} \,\Bigr] \;\ge\; 0 .
\]
For $k = 5$ we have $K = 3$ and $\alpha$ is the residue of $2(m-2)$ modulo $3$, namely $0$, $1$, $2$ according as $m \equiv 2$, $1$, $0 \pmod 3$. For $\alpha = 0$ there is nothing to check. For $\alpha = 1$ the bracket is non-negative iff $10c \le 6T - m(m-1)$, and $m(m-1) = 12T/(m-2)$, giving $c \le \tfrac{3T}{5}\cdot\tfrac{m-4}{m-2}$. For $\alpha = 2$ it is non-negative iff $10c \le 6T - \binom{m}{2}$, and $\binom{m}{2} = 6T/(m-2)$, giving $c \le \tfrac{3T}{5}\cdot\tfrac{m-3}{m-2}$. For $k \ge 6$ we have $K \ge 4$, so the subtracted term is at most $3T$, while $\Sigma_2 \ge 6T - 2T = 4T$; the bracket is then non-negative as soon as $T \ge \binom{m}{2}$, that is, $m \ge 8$.

Collecting cases gives (i)--(iii), the only admissible $m < 8$ being $m = 5$ and $m = 6$, for which the finitely many remaining constraints are checked directly. Finally, if $x^\star$ is feasible then $E(m,n;3,3) \ge 4n + 2c/3$; the reverse inequality holds because $E$ is a restriction of \eqref{eq:countlp}, whose value is $4n + 2c/3$ by Lemma~\ref{lem:lpvertex}. Theorem~\ref{thm:A} converts this into $\lfloor E \rfloor = \Rom(m,n)$.
\end{proof}

\begin{corollary}\label{cor:lpthmC}
Let $s = t = 3$, let $m \ge 5$ be admissible with $3 \nmid m$, and let $c = 2$, so that Theorem~\ref{thm:C} applies and $z(m,T-2;3,3) = \Rom(m,T-2) - 1$. Then $\lfloor E(m,T-2;3,3)\rfloor = \Rom(m,T-2)$.
\end{corollary}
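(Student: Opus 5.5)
This follows by specialising Proposition~\ref{prop:lp} to $c = 2$. We check that the relevant case of (i)--(iii) holds, and then read off the conclusion $\lfloor E\rfloor = \Rom$ already stated there.

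First, the hypotheses of Proposition~\ref{prop:lp} must hold. We need $1 \le c \le 3T/5$ with $c = 2$, that is $T \ge 10/3$. For admissible $m \ge 5$ we have $T = m(m-1)(m-2)/12 \ge 5$, so this holds, exactly as in the range check \eqref{eq:Crange} in the proof of Theorem~\ref{thm:C}.

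Next, since $3 \nmid m$, either $m \equiv 2$ or $m \equiv 1 \pmod 3$.
\begin{itemize}
\item If $m \equiv 2 \pmod 3$, case (i) imposes no condition, so $x^\star$ is feasible.
\item If $m \equiv 1 \pmod 3$, we must verify $2 \le \tfrac{3T}{5}\cdot\tfrac{m-4}{m-2}$. Substituting $T = m(m-1)(m-2)/12$, this becomes $40 \le m(m-1)(m-4)$.
\end{itemize}
The admissible $m \equiv 1 \pmod 3$ with $m \ge 5$ are $m \equiv 1 \pmod 3$ and $m \not\equiv 3 \pmod 4$, so the smallest is $m = 10$. There $m(m-1)(m-4) = 540 \ge 40$, and the left side is increasing in $m$. (Even $m = 7$ would give $126 \ge 40$, although $m = 7$ is not admissible.)

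Proposition~\ref{prop:lp} then gives $E(m,T-2;3,3) = 4(T-2) + \tfrac43$. By Theorem~\ref{thm:A} its floor is $4(T-2) + 1 = \Rom(m,T-2)$. Theorem~\ref{thm:C} applies for these $m$ because Hanani's theorem supplies the $3$-$(m,4,2)$ design. Its equality $z = \Rom - 1$ is quoted only for context and plays no role in the deduction.

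The only step requiring care is the inequality check in case (ii), and it is a one-line polynomial estimate. The real content was the case analysis already carried out in Proposition~\ref{prop:lp}, including its direct verification at $m = 5$.
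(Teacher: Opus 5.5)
Your proposal is correct and follows the paper's proof. You specialise Proposition~\ref{prop:lp} to $c = 2$, note that case (i) is unconditional, and settle case (ii) by observing that the smallest admissible $m \equiv 1 \pmod 3$ is $m = 10$ (since $m = 7$ is inadmissible), where $2 \le \tfrac{3T}{5}\cdot\tfrac{m-4}{m-2}$, equivalently $m(m-1)(m-4) \ge 40$, holds easily. You also check explicitly that $c = 2 \le 3T/5$ and that $\lfloor 4/3 \rfloor = 1$ yields $\Rom(m,T-2)$ via Theorem~\ref{thm:A}; the paper leaves both steps implicit.
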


\begin{proof}
Case (i) of Proposition~\ref{prop:lp} is unconditional. In case (ii) the condition reads $2 \le \tfrac{3T}{5}\cdot\tfrac{m-4}{m-2}$, which holds for every admissible $m \equiv 1 \pmod 3$ with $m \ge 10$; the only smaller such $m$ is $m = 7$, which is not admissible.
\end{proof}

So at exactly the parameters where Theorem~\ref{thm:C} produces an exact value, the program of \cite{DaviesGillHorsley2026} still returns Roman's bound, and the deficit of one is invisible to it.

\begin{remark}[How often the program does better]\label{rem:lpcensus}
Proposition~\ref{prop:lp} should not be read as saying that the program of \cite{DaviesGillHorsley2026} is blind in general; it is not, and it is worth recording how far the two statements are apart. Solving $E(m,n;3,3)$ in exact rational arithmetic for every admissible $m \le 26$ and every $c$ with $1 \le c \le 3T/5$ --- $4242$ parameter pairs --- one finds $\lfloor E \rfloor < \Rom(m,n)$ in $221$ of them, or $5.2\%$; restricted to the $2307$ pairs covered by Corollary~\ref{cor:E}, the count is $125$, or $5.4\%$. At some of these the improvement is substantial: for $m = 18$, $n = 164$ one has $\Rom = 818$ and $\lfloor E \rfloor = 813$. These parameters cluster near the top of the range, where by Proposition~\ref{prop:lp} the point $x^\star$ is cut off, and they include $c = 2$ when $3 \mid m$ and $m \ge 18$. Being cut off is necessary but not sufficient: at $m = 12$, $c = 2$ the vertex $x^\star$ is already infeasible, yet $E = \tfrac{5632}{13}$ and $\lfloor E \rfloor = 433 = \Rom$, so there the refined program gains nothing. What Theorem~\ref{thm:B} adds is a bound valid on the whole of the covered range, in closed form, and with no computation; what \eqref{eq:dgh} adds is a numerically stronger bound on a sparse subset of it. The two are complementary, and neither contains the other.
\end{remark}

\section{Open problems}\label{sec:open}

\begin{conjecture}\label{conj:tight}
In the situation of Theorem~\ref{thm:B}, the bound is tight: $z(m,T-c;s,t) = \Rom(m,T-c) - 1$ for every $c \equiv \tfrac{s+1}{2} \pmod s$ in the stated range, whenever an $s$-$(m,s+1,\lambda)$ design exists.
\end{conjecture}

By Theorem~\ref{thm:C} this holds for $q = 0$. The instance $(s,t,m,c) = (3,3,6,5)$ of Remark~\ref{rem:q} is consistent with the conjecture but does not test it: there $3 \mid 6$, so no $3$-$(6,4,2)$ design exists, the design hypothesis of the conjecture fails, even though both \eqref{eq:hyp1} and \eqref{eq:hyp2} hold there. A general construction would follow from a suitable ``design minus $c$ blocks plus $2q$ enlargements'' lemma.

\begin{question}\label{q:even}
What happens when $d \mid \lambda$, so that $\mu = 0$ and, for even $s$, neither alternative of Theorem~\ref{thm:B} is available? The smallest instance is $s = 4$, $t = 3$, where $d = 2$ and $\lambda = 2$: every local slack is then even, every residue is $0$, and the method is silent at every $c$. Is Roman's bound in fact attained there, or is there a further obstruction? By Theorem~\ref{thm:A} the bound is still the counting bound, so this is a clean test case.
\end{question}

\begin{question}
How far below Roman's bound does $z$ fall for large $c$? Theorem~\ref{thm:B} gives a deficit of exactly one, and since $\sigma(r) \le \sigma_{\max}$ is bounded in terms of $s$ alone, the present method cannot give more. For $c$ close to $sT/(s+2)$ we expect the true deficit to grow. Tan's values for $m = 8$, $s = t = 3$ are consistent with this: there $z = \Rom - 2$ at $n = 22$, $16$ and $13$.
\end{question}

\begin{question}
Can the congruence be iterated? A second application, at pairs rather than points, would constrain the distribution of the local slacks further. This is the natural route into the cases left open when $\mu = 0$: for $s = 3$ and $3 \nmid m$ these are $c \equiv 0$ and $c \equiv 1 \pmod 3$, where Theorem~\ref{thm:B} is silent and, by \S\ref{sec:sharp}, sometimes rightly so.
\end{question}

\begin{question}
What happens at the exceptional values $m = s\sigma(r)/\mu$ excluded by \eqref{eq:hyp1}, where the count is an exact tie rather than a contradiction? By Lemma~\ref{lem:design} no design exists at such parameters, so these are precisely the cases where neither our argument nor the classical one applies. For $s = t = 3$ the only exclusion is the degenerate $m = 3$; for larger odd $s$ the excluded set is finite and explicit.
\end{question}

\bibliographystyle{plain}
\bibliography{references}

\end{document}